\documentclass[12pt,xcolor=svgnames]{amsart}

\usepackage{amsmath, amsthm, amssymb, amsfonts, verbatim}

\usepackage{xspace,mathtools,bm}

\usepackage[svgnames]{xcolor}

\usepackage[pagebackref=true, colorlinks=true, citecolor=blue]{hyperref}

\addtolength{\topmargin}{-.3in}
\addtolength{\oddsidemargin}{-.5in}              
\addtolength{\evensidemargin}{-.5in}      
\setlength{\textwidth}{6in}
\setlength{\textheight}{8.5in}

\hypersetup{
    colorlinks=true,
    citecolor=red,
    linkcolor=blue,
    filecolor=magenta,      
    urlcolor=cyan,
    pdfpagemode=FullScreen,
    }
\newtheorem{theorem}{Theorem}[section]
\newtheorem{corollary}[theorem]{Corollary}
\newtheorem{lemma}[theorem]{Lemma}
\newtheorem{definition}[theorem]{Definition}
\newtheorem{proposition}[theorem]{Proposition}
\newtheorem{remark}[theorem]{Remark}

\newcommand{\R}{\mathbb{R}}
\newcommand{\eps}{\varepsilon}
\newcommand{\abs}[1]{\mid\!#1\!\mid}
\newcommand{\norm}[2]{\left\lVert#1\right\rVert_{#2}}
\newcommand{\eqdef}{:=}

\newcommand{\Obsolete}[1]{
    }

\date{\today}

\begin{document}

\title
    [A Nonlocal Multi-species Aggregation-Diffusion Equation]
    {Global Existence for a Nonlocal Multi-species Aggregation-Diffusion Equation}

\author{Elaine Cozzi}
\address{Department of Mathematics, Oregon State University}
\curraddr{}
\email{cozzie@math.oregonstate.edu}

\author{Zachary Radke}
\address{Department of Mathematics, Oregon State University}
\curraddr{}
\email{radkeza@oregonstate.edu}

\maketitle

\begin{abstract}
We consider the question of global existence of smooth solutions to a multi-species aggregation-diffusion equation for a class of singular interaction kernels.  We establish a smallness condition on the initial data which yields global existence of smooth solutions.  We also give conditions on the species interaction which ensure that pointwise inequalities comparing species densities are preserved by the evolution.       
\end{abstract}

\section{Introduction}
Aggregation-diffusion equations have been an active area of study in the field of mathematical biology for the last several decades.  A widely studied example is the scalar aggregation-diffusion equation of the form
\begin{align}\label{aggregation equation}
\tag{$AG_{\nu}$}
\begin{cases}
\frac{\partial \rho}{\partial t} = \nu \Delta \rho - \nabla \cdot (u\rho), \hspace{.2cm} &(x,t) \in \R^d \times (0,T]\\
u =  \nabla(\mathcal{K}\ast \rho), \hspace{.2cm} &(x,t) \in \R^d \times (0,T]\\
\rho \big|_{t=0} = \rho_0, \hspace{.2cm} &x \in \R^d.
\end{cases}
\end{align}
In the system \eqref{aggregation equation}, the density of a single species $\rho(x,t)$ undergoes both diffusion and non-local self-attraction or repulsion due to the advection term $\nabla \cdot (u\rho)$.  The function $\mathcal{K}$ is a spatial averaging kernel.  From the constitutive law $u =  \nabla(\mathcal{K}\ast \rho)$, we see that $\mathcal{K}$ governs the self-attraction or repulsion of the species.  For this reason, we refer to $\mathcal{K}$ as the {\em interaction kernel}.  

The single-species aggregation equation (\ref{aggregation equation}) has been extensively studied for various kernels in both the diffusive and non-diffusive setting (see for example \cite{CKY, Perthame, Horstmann, Bedrossian, LiRodrigo, BRB, BC, JV} and references therein).  Often, one assumes that the kernel $\mathcal{K}$ is the negative of the fundamental solution of the Laplacian, or the Newtonian potential.  In this setting, when $\nu>0$, (\ref{aggregation equation}) represents the well-known Patlak-Keller-Segel system modeling chemotaxis \cite{KS, Perthame, Horstmann, BRB, BDP, HP}.  

One can generalize \eqref{aggregation equation} to the multi-species setting via the system of equations
\begin{align}\label{PDE}
\tag{$MSAG_{\nu}$}
\begin{cases}
\frac{\partial \rho_i}{\partial t} = {\nu}_i \Delta \rho_i - \nabla\cdot(u_i\rho_i), \hspace{.2cm}  &(x,t) \in \R^d \times (0,T]\\
u_i =  \nabla \left(\sum_{j=1}^N h_{ij}\mathcal{K}_{ij}\ast \rho_j\right), \hspace{.2cm} &(x,t) \in \R^d \times (0,T]\\
\rho_i \big|_{t=0} = \rho_{i,0} , \hspace{.2cm} &x\in \R^d.\\
\end{cases}
\end{align}
Here $\rho_1,\hdots, \rho_N$ are the densities of $N$ species, ${\nu}_i>0$ is the diffusion coefficient of species $\rho_i$, and $\mathcal{K}_{ij}$ is an interaction kernel; to elaborate, for each pair $i,j$, one can interpret $\mathcal{K}_{ij}\ast \rho_j$ as a sensing function, measuring the degree to which species $i$ senses species $j$.  Here, species $i$ moves in the direction of greatest increase (if $h_{ij}>0$) or decrease (if $h_{ij}<0$) of the sensing function.  In this way, the sign of $h_{ij}$ determines whether species $i$ is attracted to or repelled by species $j$, and the magnitude of $h_{ij}$ determines the strength of attraction or repulsion.       

The generalized system (\ref{PDE}) can be written in condensed vector form, given by
\begin{align}\label{PDEcondensed}
\begin{cases}
\frac{\partial \rho}{\partial t} = \nu\Delta \rho - \nabla \cdot (u\rho), \hspace{.2cm} &(x,t) \in \R^d \times (0,T] \\
u = \nabla ((H\circ\mathcal{K}) \ast \rho),  \hspace{.2cm} &(x,t) \in \R^d \times (0,T]\\
\rho\big|_{t=0}=\rho_0, \hspace{.2cm} &x \in \R^d, 
\end{cases}
\end{align}
where $\rho = (\rho_1,\cdots,\rho_N)^T$, $\nu=\text{diag}({\nu}_i)$ is a diagonal matrix consisting of the viscosities, $H$ is a matrix with ${ij}$-entry $h_{ij}$, and $\mathcal{K}$ is a matrix with ${ij}$-entry $\mathcal{K}_{ij}$. Here $H\circ \mathcal{K}$ denotes the Hadamard product of $H$ and $\mathcal{K}$,
$$(H\circ \mathcal{K})_{ij} = h_{ij}\mathcal{K}_{ij}.$$
The equality $u = \nabla ((H\circ \mathcal{K}) \ast \rho)$ is understood component-wise by $$u_i = \nabla ((H\circ\mathcal{K}) \ast \rho)_i := \nabla \left(\sum_{j=1}^N h_{ij}\mathcal{K}_{ij}\ast \rho_j\right).$$  Throughout the paper, we refer to $H$ as the {\em interaction matrix}.

In recent years, nonlocal models such as (\ref{PDE}) have been commonly used to model species interaction \cite{GHLP, MainGHLP, JPZ, CSS, WS, PPH, BH}, as organisms often move in response to stimuli outside of their local surroundings, such as distant sounds or smells emanating from other species or food sources.  Both the organism's ability to detect a given stimulus and its response to the stimulus will likely vary depending on both the type of stimulus and its distance from the organism.  This phenomenon is captured in the varying interaction kernels $\mathcal{K}_{ij}$ and the entries of the interaction matrix $H$.   

Many recent results in this area address the question of short-time and global in time existence of both weak and classical solutions, under various assumptions on the interaction kernels, interaction matrix, or both.  We provide details on a few recent results most relevant to our work.  In \cite{MainGHLP, GHLP}, the authors establish global existence of non-negative weak solutions in any dimension and smooth solutions in one dimension under the assumption that the interaction kernels are twice differentiable and have bounded gradient.  These works place no assumption on the interaction matrix $H$ or on the relationship between the interaction kernels.  On the other hand, in \cite{JPZ}, the authors prove global existence of weak solutions assuming the interaction kernels are merely integrable, but place a so-called detailed balance condition on the kernels; in particular, they require that there exist constants $\pi_1,\pi_2, .... \pi_N$ such that for all $1\leq i,j\leq N$ and a.e. $x,y\in \R^d$, $\pi_i \mathcal{K}_{ij}(x-y) = \pi_j \mathcal{K}_{ji}(x-y)$.  In \cite{CSS}, the authors assume the interaction kernels are bounded and integrable, and they show that for sufficiently small initial mass, weak solutions to (\ref{PDE}) exist globally in time.  They go on to show that, assuming the kernels are of bounded variation and satisfy the detailed balance condition, global existence of weak solutions holds for arbitrary initial mass.  Finally, they show global existence of non-negative classical solutions under the additional assumptions that the interaction kernels are compactly supported and the initial data is smooth.  Under a smallness condition on the initial data, the authors are able to establish global existence of classical solutions assuming only that the kernels belong to $L^1\cap L^{\infty}$ and are of bounded variation, without a compact support or detailed balance assumption.  In another very recent work \cite{CGH}, the authors establish global existence of weak solutions to (\ref{PDE}) in dimensions three and higher for small initial data and for interaction kernels potentially as singular as the Newtonian potential.  In their work, no assumptions are placed on the interaction matrix $H$.       

In this work, we are interested in global existence of classical solutions to (\ref{PDE}) for singular interaction kernels.  We restrict our attention to dimensions $2$ and $3$, as these are the most physical settings.

Short-time existence of smooth solutions to (\ref{PDE}) can be established for smooth initial data and admissible kernels (see Definition \ref{admissible kernel}) by modifying the classical argument for the single-species setting.  For completeness, we provide this modification in the Appendix.  Our objective is to establish conditions on the viscosity, interaction kernels, and initial data which allow us to extend the short-time solution to a global-in-time solution.  We address this in Theorem \ref{smallHexistence} below.  

We note that our result in Theorem \ref{smallHexistence} differs from the existence results described above (with the exception of \cite{JPZ,CGH}) in that we do not require boundedness of the interaction kernels.  We do, however, impose a smallness condition on the initial data.  Meanwhile the result in \cite{JPZ} allows for merely integrable interaction kernels and no smallness condition on the data, but the authors impose a detailed balance condition, which we do not impose.  Finally, while the global existence result in \cite{CGH} allows for more singular kernels than those utilized in this work, the focus in \cite{CGH} is on weak solutions in dimensions three and higher, while we are interested in global smooth solutions in dimensions two and three.     

In addition to proving Theorem \ref{smallHexistence} below, we include a proof of a single-species analogue, given in Theorem \ref{smallHexistencesingle}. Theorem \ref{smallHexistencesingle} and small variants are essentially known \cite{Bedrossian, BRB}, but we include the proof to highlight the differences between the single and multi-species settings.     

We now state our main existence result.  Below, and in what follows,
$$ \nu_{min} = \min_{1\leq i \leq N} \nu_i,$$ and $$ \| H \|= \max_{1\leq i,j \leq N} |h_{ij}|.$$
Acceptable and allowable kernels are introduced in Definitions \ref{acceptable} and \ref{allowable}, respectively.
\begin{theorem}\label{smallHexistence}
Let $k\geq 3$, $r>2$, and $N\geq 2$.  Assume that $\rho$ is a non-negative solution to (\ref{PDE}) on $[0,T]$ belonging to $Lip([0,T]; W^{k,1}\cap W^{k,\infty}(\R^d))$.  Then there exists a constant $C>0$, depending on $N$, $r$, and when $d=2$, the support of $\mathcal{K}$, such that if either $\mathcal{K}$ is acceptable and $\nu$ satisfies   
\begin{align}\label{viscosityassumptionmain}
\nu_{min} >
\begin{cases}
C \| H\|(\|\mathcal{K}\|_{L^{r}}+ \|\nabla\mathcal{K}\|_{L^2})\|\rho_0\|_{W^{1,1}},  &d=2,\\
C \| H\|(\|\mathcal{K}\|_{L^{3}}+ \|\nabla\mathcal{K}\|_{L^2})(\|\rho_0\|_{L^1\cap L^2}+ \|\nabla\rho_0\|_{L^1}),  &d=3,
\end{cases}
\end{align}
or $\mathcal{K}$ is allowable and $\nu$ satisfies 
\begin{align}\label{viscosityassumptionmain1}
\nu_{min} >
\begin{cases}
C \| H\|\|\nabla\mathcal{K}\|_{L^{r}}\|\rho_0\|_{L^1},  &d=2,\\
C\| H\|\|\nabla\mathcal{K}\|_{L^{3}}\|\rho_0\|_{L^1},  &d=3,
\end{cases}
\end{align}
then $\rho$ can be extended to a unique non-negative global-in-time smooth solution to (\ref{PDE}) satisfying $\rho \in Lip([0,\infty);W^{k,1} \cap W^{k,\infty}(\R^d))$.
\end{theorem}
As mentioned above, short-time existence of a unique solution to (\ref{PDE}) under the assumptions of Theorem \ref{smallHexistence} is a straightforward modification of the case $N=1$, and we provide a proof in the Appendix.  

The main idea in the proof of Theorem \ref{smallHexistence} is to show that under the stated assumptions, $\|\rho(t) \|_{L^2(\R^d)}$ decays with time.  Such decay follows from classical arguments when the divergence of the velocity is non-negative; we give a refined argument similar to that of \cite{Bedrossian} which utilizes our assumptions on the viscosity.  The case $N\geq 2$ presents challenges that are not present when $N=1$, since the velocity of a given species will now depend on the other species in the model.  

The $L^2$-decay estimate, combined with short-time existence as given by Theorem \ref{small data result} and a bootstrapping argument, yields global existence of smooth solutions.  As one might expect from the assumption (\ref{viscosityassumptionmain}), our bootstrapping argument relies heavily on control of the $L^1$-norm of the density gradient.  This control is addressed in Lemma \ref{gradient bounds lemma}.     

The second objective of this work, addressed in Theorems \ref{comparison1} and \ref{comparison2} below, is to establish conditions on the interaction kernel $\mathcal{K}$ and the interaction matrix $H$ which ensure that pointwise inequalities comparing species densities are preserved by the evolution.  

Before stating Theorems \ref{comparison1} and \ref{comparison2}, we remark that the assumptions of these two theorems imply the assumptions of Theorem \ref{smallHexistence} in the case of allowable $\mathcal{K}$.  In particular, the ideal kernels in Theorems \ref{comparison1} and \ref{comparison2} below are also allowable (see Definition \ref{ideal}), and the assumptions on $\nu$ in Theorems \ref{comparison1} and \ref{comparison2} imply the assumption on $\nu$ in (\ref{viscosityassumptionmain1}) above.  Thus, Theorem \ref{smallHexistence} allows us to consider global-in-time solutions to (\ref{PDE}) in Theorems \ref{comparison1} and \ref{comparison2}.

\begin{theorem}\label{comparison1}
\label{Positivity lemma for divergence}
Let $k\geq 3$, $r>2$, and $N\geq 2$.  Assume that $\mathcal{K}$ is ideal, and that for each $i$ between $1$ and $N$, $\mathcal{K}_i= \mathcal{K}_{i1}=\mathcal{K}_{i2}=\dots =\mathcal{K}_{iN}$.  Let $C_{\mathcal{K}}$ be as in Definition \ref{ideal}.  Also assume that $\nu_i = \nu$ for every $1\leq i\leq N$, and that $\rho \in Lip([0,\infty); W^{k,1} \cap W^{k,\infty}(\R^d))$ is a non-negative, global-in-time solution to (\ref{PDE}).  Finally, assume that there exists a column vector of real numbers $\gamma=[\gamma_1, \dots, \gamma_N]^T$ such that, if $\underline{h} = [h_{11}, h_{12},\dots, h_{1N}]$, then 
\begin{equation*}
H=\gamma \underline{h}.
\end{equation*}
For $1\leq i \leq N$, set $$\theta_i = h_{1i}\rho_i$$ and $$\theta(x,t) \eqdef \sum_{i=1}^N \theta_i(x,t).$$ 

Under the above assumptions, there exists a constant $C>0$, depending on $r$ and $N$, such that if $\theta_0(x)\geq 0$ on $\R^d$ and 
\begin{align}\label{viscosityassumptioncomparison1}
\nu >
\begin{cases}
C \| H\|(\|\mathcal\nabla {K}\|_{L^{r}}+ \|\Delta\mathcal{K}\|_{L^2}+C_{\mathcal{K}})\|\rho_0\|_{L^1\cap W^{1,\infty}},  &d=2,\\
C \| H\|(\|\mathcal\nabla{K}\|_{L^2\cap L^{3}}+ \|\Delta\mathcal{K}\|_{L^2}+ C_{\mathcal{K}}) \|\rho_0\|_{L^1\cap W^{1,\infty}},  &d=3,
\end{cases}
\end{align}
then $\theta(x,t) \geq 0$ on $\R^d \times [0,\infty)$.
\end{theorem} 
The significance of Theorem \ref{comparison1} can perhaps be best understood when $N=2$.  If, say, $\theta(x,0) = h_{11} \rho_1(x,0) + h_{12}\rho_2(x,0)\geq 0$ for some $h_{11}$, $h_{12}$ and for every $x\in\R^d$, then $h_{11} \rho_1(x,0) \geq  -h_{12}\rho_2(x,0)$.  Theorem \ref{comparison1} states that, if the interaction kernels are sufficiently localized (see Definition \ref{ideal}), then the pointwise comparison between the two species is preserved over time; specifically, $h_{11} \rho_1(x,t) \geq  -h_{12}\rho_2(x,t)$ on $\R^d\times [0,\infty)$.  

Note also that for $1\leq j\leq N$, under the additional assumption that $\gamma_j\mathcal{K}_j\geq 0$ a.e., Theorem \ref{comparison1} states that if the linear combination of sensing functions for species $j$, given by 
\begin{equation*}\label{velacrossj}
\gamma_j\mathcal{K}_j\ast \theta,
\end{equation*}
is initially non-negative, then it must remain non-negative as time evolves.

Theorem \ref{comparison2} below is similar to Theorem \ref{comparison1}, but is specific to $N=2$ and allows for a different set of assumptions on $H$.  Specifically, we do not require that $H$ has rank $1$.     

\begin{theorem}\label{comparison2} Let $k\geq 3$, $N = 2$, $r>2$, and $\nu_1=\nu_2=\nu$.  Assume $\rho_1$ and $\rho_2$ are non-negative, global-in-time solutions to (\ref{PDE}) in $Lip([0,\infty);W^{k, 1}\cap W^{k,\infty}(\R^d))$.  Further assume that $\mathcal{K}$ is ideal (with $C_{\mathcal{K}}$ as in Definition \ref{ideal}) and that there exists $c_0>0$ such that for every $x\in\R^d$, $$c_0( h_{11}\mathcal{K}_{11} -h_{21} \mathcal{K}_{21})(x) = (h_{22}\mathcal{K}_{22}-h_{12}\mathcal{K}_{12})(x)$$ and $$ \rho_1(x,0) \geq c_0\rho_2(x,0).$$ Then there exists a constant $C>0$, depending on $r$ and $N$, such that if
\begin{align}\label{viscosityassumptioncomparison2}
\nu >
\begin{cases}
C \| H\|(\|\mathcal\nabla {K}\|_{L^{r}}+ \|\Delta\mathcal{K}\|_{L^2}+C_{\mathcal{K}})\|\rho_0\|_{L^1\cap W^{1,\infty}},  &d=2,\\
C \| H\|(\|\mathcal\nabla{K}\|_{L^2\cap L^{3}}+ \|\Delta\mathcal{K}\|_{L^2}+ C_{\mathcal{K}})\|\rho_0\|_{L^1\cap W^{1,\infty}},  &d=3,
\end{cases}
\end{align}
then $\rho_1(x,t) \geq c_0\rho_2(x,t)$ for all $(x,t)\in \R^d\times [0,\infty)$.
\end{theorem}
To understand Theorem \ref{comparison2}, observe that if we also assume $$h_{11}\mathcal{K}_{11} -h_{21} \mathcal{K}_{21}\geq 0$$ on $\R^d$, then the conditions on $H$ imply that
\begin{equation*}
\begin{split}
&c_0( h_{11}\mathcal{K}_{11} -h_{21} \mathcal{K}_{21})\ast\rho_1(x,0) = (h_{22}\mathcal{K}_{22}-h_{12}\mathcal{K}_{12})\ast\rho_1(x,0)\\
&\qquad \geq c_0(h_{22}\mathcal{K}_{22}-h_{12}\mathcal{K}_{12})\ast \rho_2(x,0).
\end{split}
\end{equation*}
Dividing through by $c_0$ and rearranging terms, this gives
$$h_{11}\mathcal{K}_{11} \ast  \rho_1(x,0) + h_{12}\mathcal{K}_{12} \rho_2(x,0) \geq h_{21}\mathcal{K}_{21} \ast \rho_1(x,0) + h_{22}\mathcal{K}_{22}\ast \rho_2(x,0).$$
Theorem \ref{comparison2} states that this property is preserved over time; that is, the linear combination of sensing functions for species $1$ will remain larger over time than that for species $2$.

We note that while the techniques used to prove Theorem \ref{comparison2} are similar to those used to prove Theorem \ref{comparison1}, Theorem \ref{comparison2} is not a consequence of Theorem \ref{comparison1}.  To see this, consider the example $\mathcal{K}_{ij} = \mathcal{K}_{11}$ for all $i,j$ and \[    H=   \left[ {\begin{array}{cc}    10 & -9 \\    9 & -8 \\   \end{array} } \right], \]
with $c_0 = 1$, which satisfies the conditions of Theorem \ref{comparison2}, but not those of Theorem \ref{comparison1}.

The paper is organized as follows.  In Section \ref{Definitions and Preliminary Lemmas}, we state some useful definitions and lemmas.  In Section \ref{smallH}, we prove Theorems \ref{smallHexistencesingle} and \ref{smallHexistence}.  In Section \ref{comparisons}, we prove Theorems \ref{comparison1} and \ref{comparison2}. Finally, in the Appendix, we prove several a priori estimates as well as short-time existence of smooth solutions to (\ref{PDE}).

\section{Definitions and Preliminary Lemmas} \label{Definitions and Preliminary Lemmas}  

In this section, we state some definitions and lemmas that will be useful in subsequent sections.  
\subsection{Notational Conventions and Definitions} We begin with some notational conventions and definitions.
\begin{definition}\label{vector norms}
Let $p \in [1,\infty]$. If $f=(f_1,\hdots, f_N)$ is a vector-valued function, we say $f$ belongs to $L^p(\R^d)$ if $f_i\in L^p(\R^d)$ for all $i$, $1\leq i \leq N$.  We set  
$$\| f \|_{L^p}
=
\max_{1\leq i \leq N} \norm{f_i}{L^p(\R^d)} <\infty.
$$
Similarly, if $f=(f_{ij})_{1\leq i,j\leq N}$ is matrix-valued, we say $f$ is in $L^p(\R^d)$ if $f_{ij}\in L^p(\R^d)$ for all $i,j$, $1\leq i,j \leq N$.  In this case, we set  
$$\| f \|_{L^p}
=
\max_{1\leq i,j \leq N} \norm{f_{ij}}{L^p(\R^d)} <\infty.
$$
\end{definition} 

\begin{definition}(Bochner Space) Let $(A,\mathcal{S},\mu)$ be a measure space, $(X,\norm{\cdot}{X})$ be a Banach space, and $1 \leq p \leq \infty$.  The Bochner space $L^p(A;X)$ is defined via the norm
\begin{equation}
\begin{cases}
\norm{f}{L^p(A;X)} = \Large \left(\int_{A} \norm{f(t)}{X}^p d\mu(t)\right)^{1/p}, 1\leq p <\infty,\\
\norm{f}{L^{\infty}(A;X)} = \sup_{t \in A} \norm{f(t)}{X} , \text{  }p=\infty.\\
\end{cases}
\end{equation}
\end{definition}

In what follows, for $p,q\in[1,\infty]$, we equip the Banach space $L^p\cap L^q(\R^d)$ with the norm
\begin{equation*}
\| f\|_{L^p\cap L^q} = \| f \|_{L^p} + \| f \|_{L^q}.
\end{equation*}

Throughout the paper, we let $C$ denote a constant which may change from line to line.  We will make explicit the dependence of constants on various quantities, such as $\nu$ and norms of functions, when this dependence is significant to the results.  In general, to simplify the presentation, we will not make explicit the dependence of constants on $N$ and $d$. 

\Obsolete{We let $a:\R^d \to \R$ denote a radially symmetric bump function such that $a \in C_0^{\infty}(\R^d)$ with $\text{supp } a = \{x \in \R^d: |x| \leq 2\}$.  In addition, we assume $a$ is identically $1$ in $B_1(0)$ and is monotone decreasing for $1 \leq |x| \leq 2$.}

\subsection{Interaction Kernels} We now define the various types of interaction kernels used throughout this work.  We begin with the set of admissible kernels.  In Appendix \ref{A Priori Estimates}, we will assume the interaciton kernel is admissible and establish short-time existence of smooth solutions to (\ref{PDE}).  We note that short-time existence can be established (for both $N=1$ and $N\geq 2$) under weaker assumptions on $\mathcal{K}$, but the additional assumptions will simplify the arguments and are more consistent with the assumptions placed on $\mathcal{K}$ throughout the rest of the paper.  
\begin{definition}
\label{admissible kernel}
(Admissible Kernels) We say that $\mathcal{K}$ is admissible if for every $i,j$ with $1\leq i,j\leq N$,
\begin{enumerate}
\item $\mathcal{K}_{ij}\in L_{loc}^{1}(\R^d)$,
\item
$\mathcal{K}_{ij}(x) = k_{ij}(\abs{x}) = k_{ij}(r)$ is radially symmetric,
\item $\nabla \mathcal{K}_{ij} \in L^2(\R^d)$. 
\end{enumerate}
\end{definition}
\noindent We denote the set of admissible kernels by $\mathcal{A}$.


\begin{definition}(Acceptable Kernels)\label{acceptable}
We say $\mathcal{K} \in \mathcal{A}$ is acceptable if for every $i,j$ with $1\leq i,j \leq N$, 
\begin{enumerate}
\item 
($d=2$ only) $\mathcal{K}_{ij}\in L^{r}(\R^2)$ for some $r>2$ and is compactly supported,   
\item 
($d=3$ only) $\mathcal{K}_{ij}\in L^{3}(\R^3)$.
\end{enumerate}
\end{definition}

In Section \ref{smallH}, we also make use of allowable kernels.

\begin{definition}(Allowable Kernels)\label{allowable}
We say $\mathcal{K} \in \mathcal{A}$ is allowable if for every $i,j$ with $1\leq i,j \leq N$, 
\begin{enumerate}
\item 
($d=2$ only) $\nabla\mathcal{K}_{ij}\in L^{r}(\R^2)$ for some $r>2$ and is compactly supported,  
\item 
($d=3$ only) $\nabla\mathcal{K}_{ij}\in L^{3}(\R^3)$.
\end{enumerate}
\end{definition}

Finally, in Section \ref{comparisons}, we make use of ideal kernels, defined as follows.

\begin{definition}(Ideal Kernels)\label{ideal}
We say $\mathcal{K} \in \mathcal{A}$ is ideal if $\mathcal{K}$ is allowable, and if for every $i,j$ with $1\leq i,j \leq N$, $\Delta \mathcal{K}_{ij}$ belongs to $L^2(\R^d)$ and there exists $C_{ij}>0$ such that 
\begin{equation*} 
\abs{\nabla \mathcal{K}_{ij}(x)}, \abs{\Delta \mathcal{K}_{ij}(x)} \leq
\begin{cases} 
\frac{C_{ij}}{|x|^{d}} \text{ for a.e. } x \in B_1(0)\\
\frac{C_{ij}}{|x|^{d+2}} \text{ for a.e. } x \in \R^d\backslash B_1(0).
\end{cases} 
\end{equation*} 
For an ideal kernel $\mathcal{K}$, we set $C_{\mathcal{K}}=\max_{1\leq i,j\leq N} C_{ij}$.
\end{definition}
\begin{remark}
The membership of $\mathcal{K}$ to $W^{1,1}_{loc}(\R^d)$ and Definition \ref{ideal} imply that ideal kernels satisfy $\nabla \mathcal{K}\in L^1(\R^d)$ and $\Delta \mathcal{K}\in L^1(\R^d)$. 
\end{remark}
\subsection{Useful Inequalities}
In what follows, we make use of the following well-known Sobolev inequality, which is proved in \cite{AB}, Lemma 7.17.
\begin{lemma}\label{Sobolev inequality}
Assume $p\in [1,d)$ and q = dp/(d-p).  Then there exists $C>0$ depending on $d$ and $p$ such that, for all $f\in W^{1,p}(\R^d)$, $$\|f\|_{L^q} \leq C \|\nabla f \|_{L^p}.$$
\end{lemma}
We also make use of the following interpolation inequality, due to Nash \cite{Nash}.
\begin{lemma}\label{Nash inequality}
There exists $C>0$ depending on $d$ such that for all $f\in L^1\cap W^{1,2}(\R^d)$, $$\| f \|^{1+2/d}_{L^2} \leq C \| f \|^{2/d}_{L^1}\|\nabla f \|_{L^2}.$$
\end{lemma}   
\Obsolete{\subsection{ODE Lemmas} 
We begin this subsection with a variation on the classical Gr\"onwall Lemma.
\begin{lemma}\label{Gronwall}
Let $T>0$, let $L$ and $\beta$ be non-negative, continuous functions on $[0,T]$, and let $\alpha\geq 0$ be a constant. If for all $t\in[0,T]$, $$L(t)\leq \alpha+\int_0^t (t-s)^{-1/2}\beta(s)L(s) \, ds,$$ then for all $t\in[0,T]$, $$\sup_{s\in[0,t]}L(s)\leq \alpha\exp{\left(\int_0^t(t-s)^{-1/2}\beta(s)\,ds\right)}.$$ 
\end{lemma}
\begin{proof}
For each $t\in [0,T]$, set $G(t) = \sup_{s\in[0,t]} L(s)$, and note that $G$ is continuous on $[0,T]$.  For each $n\in\mathbb{N}$ and $t\in [0,T]$, write 
\begin{equation*}
\begin{split}
&L(t) \leq \alpha + \int_0^t (t-s)^{-1/2}\beta(s)L(s) \, ds = \alpha + \int_0^{t-\frac{1}{n}} (t-s)^{-1/2}\beta(s)L(s) \, ds\\
&\qquad  + \int_{t-\frac{1}{n}}^t (t-s)^{-1/2}\beta(s)L(s) \, ds\\
& \leq \alpha + \int_0^{t-\frac{1}{n}} (t-s)^{-1/2}\beta(s)G(s) \, ds + \frac{2}{\sqrt{n}} \sup_{\tau\in[0,t]}\beta(\tau)G(t).
\end{split}
\end{equation*}   
We claim that, for sufficiently large $n$, the right hand side of the above inequality is increasing in $t$.  To see this, we use Leibniz rule to differentiate the first term on the right hand side with respect to $t$, giving,
\begin{equation*}
\begin{split}
&\frac{d}{dt} \left( \alpha + \int_0^{t-\frac{1}{n}} (t-s)^{-1/2}\beta(s)G(s) \, ds \right)\\
& = \sqrt{n}\beta\left(t-\frac{1}{n}\right) G\left(t-\frac{1}{n}\right) + \int_0^{t-\frac{1}{n}} \frac{\partial}{\partial t} (t-s)^{-1/2} \beta(s)G(s) \, ds\\
&\geq \sqrt{n}\beta\left(t-\frac{1}{n}\right) G\left(t-\frac{1}{n}\right) +  \sup_{s\in[0,t-1/n]}\beta(s)G\left(t-\frac{1}{n}\right)\left( t^{-1/2} - \sqrt{n}\right)\\
&= \sup_{s\in[0,t-1/n]}\beta(s)G\left(t-\frac{1}{n}\right) t^{-1/2} + \sqrt{n}G\left(t-\frac{1}{n}\right) \left( \beta\left(t-\frac{1}{n}\right) - \sup_{s\in[0,t-1/n]}\beta(s)\right)
\end{split}
\end{equation*}
 we can write
\begin{equation*}
\begin{split}
&G(t) \leq \alpha + \int_0^{t-\frac{1}{n}} (t-s)^{-1/2}\beta(s)G(s) \, ds + \frac{2}{\sqrt{n}} \sup_{\tau\in[0,t]}\beta(\tau)G(t).
\end{split}
\end{equation*} 
Noting that $G\left(t-\frac{1}{n}\right) \leq G(t)$ and rearranging terms, we conclude that
\begin{equation*}
G\left(t-\frac{1}{n}\right) \leq \left(\alpha + \frac{2}{\sqrt{n}} \sup_{\tau\in[0,t]}\beta(\tau)G(t)\right) + \int_0^{t-\frac{1}{n}} (t-s)^{-1/2}\beta(s)G(s) \, ds.
\end{equation*}
We are now in a position to apply the classical version of Gr\"onwall's Lemma to establish an upper bound for $G\left(t-\frac{1}{n}\right)$.  This gives
\begin{equation*}
G\left(t-\frac{1}{n}\right) \leq \left(\alpha + \frac{2}{\sqrt{n}} \sup_{\tau\in[0,t]}\beta(\tau)G(t)\right)\exp\left( \int_0^{t-\frac{1}{n}} (t-s)^{-1/2}\beta(s) \, ds\right).
\end{equation*}
By continuity of $G$ on $[0,T]$ and the Monotone Convergence Theorem, passing to the limit of both sides as $n\rightarrow\infty$ gives
\begin{equation*}
G\left(t\right) \leq \alpha\exp\left( \int_0^{t} (t-s)^{-1/2}\beta(s) \, ds\right).
\end{equation*} 
\end{proof}.  
We now state Osgood's Lemma.  We refer the reader to \cite{Chemin}, Chapter 5, for a proof of the lemma. 
\begin{lemma}\label{Osgood}
(Osgood's Lemma) Let $T>0 $, $\Psi$ a positive borelian function, $\gamma$ a locally integrable positive function, and $\mu$ a continuous increasing function. Assume that for some number $\beta>0$ and for all $t\in[0,T]$, these functions satisfy $$\Psi(t)\leq\beta +\int_0^t\gamma(s)\mu(\Psi(s))\,ds.$$
Then for all $t\in[0,T]$ $$-\phi(\Psi(t))+\phi(\beta)\leq \int_0^t \gamma(s)\,ds,$$ where $\phi(x)=\int_x^1\frac{1}{\mu(r)}\,dr$. 
\end{lemma}. }
\subsection{Mild solutions of (\ref{aggregation equation}) and (\ref{PDE})} We now introduce the semigroup operator for the Laplacian and some of its properties. This operator arises in the definition of a mild solution of (\ref{aggregation equation}) and (\ref{PDE}). 
\begin{definition}
Define the semigroup operator for the Laplacian $e^{\nu\Delta t}$ via its Fourier transform as

$$
\mathcal{F}(e^{\nu\Delta t}f(\cdot)) = \mathcal{F}(g(\cdot,t)) \mathcal{F}(f(\cdot)),
$$
where
$$
\mathcal{F}(g)(\xi,t) = e^{-\nu |\xi|^{2}t}.
$$
\end{definition}
\begin{proposition}
\label{fractional heat kernel bound}
Let $p\in [1,\infty]$, $t>0$ and $f \in L^1 \cap L^{p}(\R^d)$.  Then $$\norm{e^{\nu\Delta t}f}{L^1 \cap L^{p}} \leq \norm{f}{L^1 \cap L^{p}}.$$
\end{proposition}
\begin{proof}
The result immediately follows from an application of Young's convolution inequality and the fact that 
$$
\norm{g(\cdot,t)}{L^1}=\mathcal{F}(g)(0,t)=1.
$$
\end{proof}

We refer to \cite{Wu} for a proof of the next proposition.  Note that the proof in \cite{Wu} is specific to $d=2$ but can easily be extended to $d \geq 2$.
\begin{proposition}
\label{semigroup bounds}
Let $1\leq p \leq q \leq \infty$ and $t>0$.  Then the operators $e^{\nu\Delta t}$ and $\nabla e^{\nu\Delta t}$ are bounded from $L^p(\R^d)$ to $L^q(\R^d)$. Specifically, we have 
$$
\norm{e^{\nu\Delta t}f}{L^q} \leq C(\nu t)^{-\frac{d}{2}\left(\frac{1}{p}-\frac{1}{q}\right)}\norm{f}{L^p},
$$
$$
\norm{\nabla e^{\nu\Delta t}f}{L^q} \leq C(\nu t)^{-\left(\frac{1}{2}+\frac{d}{2}\left(\frac{1}{p}-\frac{1}{q}\right)\right)}\norm{f}{L^p},
$$
where $C$ is a constant depending on $d$, $p$, and $q$ only. 
\end{proposition}
 
We now define a mild solution of (\ref{aggregation equation}) when $N=1$, or, analogously, (\ref{PDE}) when $N\geq 2$.
\begin{definition}\label{Mild Solution Def}
We say that $\rho$ is a mild solution of (\ref{aggregation equation}) when $N=1$, or \eqref{PDE} when $N\geq 2$, if $\rho$ belongs to $L^{\infty}([0,T];L^1 \cap L^{\infty}(\R^d))$ and satisfies
\begin{equation}
\label{mild solution}
\rho(\cdot,t)=e^{\nu\Delta t}\rho_0 - \int_0^t \nabla e^{\nu\Delta(t-\tau)}(u\rho)(\tau)\, d\tau \hspace{.5cm} \text{ for all } t \in [0,T]. 
\end{equation}
Here $u=\nabla\mathcal{K}\ast\rho$ when $N=1$ and $u=\nabla((H\circ\mathcal{K})\ast \rho)$ when $N\geq 2$, with $\mathcal{K}$ admissible.
\end{definition}

\section{Proof of Theorem \ref{smallHexistence}}\label{smallH}

In this section, we prove Theorem \ref{smallHexistence}.  We also prove a single-species analogue, which we now state.

\begin{theorem}\label{smallHexistencesingle}
Let $T>0$ be as in Theorem \ref{small data result}.  Let $k\geq 3$ and $N=1$, and assume that $\mathcal{K}\in
\mathcal{A}\cap L^1(\R^d)$.  Assume $\rho$ is a non-negative solution to (\ref{aggregation equation}) on $[0,T]$ belonging to $Lip([0,T]; W^{k,1}\cap W^{k,\infty}(\R^d))$.  Then there exists an absolute constant $C>0$ such that, if $\nu$ satisfies  
\begin{equation}\label{viscosityassumptionmainsingle}
\nu >
C (\|\mathcal{K}\|_{L^{1}}+ \|\nabla\mathcal{K}\|_{L^2})\|\rho_0\|_{L^1\cap L^{\infty}}\\
\end{equation}
then $\rho$ can be extended to a unique non-negative global-in-time smooth solution to (\ref{aggregation equation}) satisfying $\rho \in Lip([0,\infty);W^{k,1} \cap W^{k,\infty}(\R^d))$.
\end{theorem}
Before proving the theorems, we establish bounds on several key quantities.
\\

\noindent {\bf Bounds on $\|\rho\|_{L^2}$, $\|\rho\|_{L^{\infty}}$, and $\|\nabla\rho\|_{L^{1}}$.} The following two lemmas, showing decay of the $L^2$-norm of the density, are the key ingredients in the proofs of Theorems \ref{smallHexistencesingle} and \ref{smallHexistence}, respectively.
\begin{lemma}\label{energydecreasesingle}
Let $T$ be as in Theorem \ref{small data result}, let $k\geq 3$, and let $N=1$.  Assume $\mathcal{K}\in\mathcal{A}\cap L^1(\R^d)$ and $\rho$ is a solution to (\ref{aggregation equation}) in $Lip([0,T];W^{k,1} \cap W^{k,\infty}(\R^d))$.  Under these assumptions, if 
\begin{equation}\label{viscosityassumptionsingle}
\nu > 2\|\mathcal{K}\|_{L^{1}}\|\rho_0\|_{L^1 \cap L^{\infty}},\\
\end{equation}
then for all $t\in [0,T]$,
\begin{equation}\label{L2decaysingle}
\|\rho(t)\|_{L^2} \leq \frac{\|\rho_{0}\|_{L^2}}{\left(1+ C_1\eta t\|\rho_{0}\|^{4/d}_{L^2}\right)^{d/4}}.
\end{equation}
Here $C_1=C\|\rho_0\|^{-4/d}_{L^1}$, and
$$\eta = \nu - 2\|\mathcal{K}\|_{L^1}\|\rho_0\|_{L^1\cap L^{\infty}}.$$
\end{lemma}
\begin{proof}
We first multiply (\ref{aggregation equation}) by $\rho$ and integrate in the spatial variable, which gives
\begin{equation}\label{energyibpsingle}
\begin{split}
&\frac{d}{dt} \|\rho(t)\|^2_{L^2} = 2\int_{\R^d} (\nu\rho\Delta\rho - \rho\nabla\cdot(u\rho) ) \, dx.\\
\end{split}
\end{equation} 
We manipulate each term on the right hand side, beginning with $2\int_{\R^d} \nu\rho\Delta\rho \, dx$.  Integration by parts yields 
\begin{equation}\label{visctermsingle}
\begin{split}
&2\nu\int_{\R^d} \rho\Delta\rho \, dx = -2\nu  \int_{\R^d} |\nabla\rho|^2 \, dx.
\end{split}
\end{equation} 
We now examine $2\int_{\R^d} \rho\nabla\cdot(u\rho)  \, dx$.  Applying the product rule and again integrating by parts, we have
\begin{equation*}
\begin{split}
&2\int_{\R^d} \rho\nabla\cdot(u\rho)  \, dx = 2\int_{\R^d} \rho^{2}\nabla\cdot u  \, dx + 2\int_{\R^d} \rho u \cdot \nabla\rho \, dx \\
&\qquad = 2\int_{\R^d} \rho^{2}\nabla\cdot u  \, dx-2\int_{\R^d} \rho\nabla\cdot(u\rho)  \, dx.
\end{split}
\end{equation*}
Thus,
\begin{equation}\label{PRIBP}
\begin{split}
&2\int_{\R^d} \rho\nabla\cdot(u\rho)  \, dx = \int_{\R^d} \rho^{2}\nabla\cdot u  \, dx = \int_{\R^d} \rho^{2}\left(\Delta\mathcal{K}\ast \rho \right) \, dx\\
& \qquad= -\int_{\R^d} \nabla(\rho^{2})\cdot\left(\mathcal{K}\ast \nabla\rho  \right)\, dx = -2\int_{\R^d} \rho\nabla\rho\cdot\left(\mathcal{K}\ast \nabla\rho \right) \, dx.
\end{split}
\end{equation} 
By (\ref{PRIBP}), H\"older's inequality, and Young's inequality,
\begin{equation}\label{term2boundsingle}
\begin{split}
& 2\int_{\R^d} \rho\nabla\cdot(u\rho)  \, dx  \leq 2\|\rho\|_{L^{\infty}} \| \nabla\rho \|_{L^2} \| \mathcal{K}\|_{L^1}\|\nabla \rho \|_{L^2}\\
&\qquad = 2\|\rho\|_{L^{\infty}}  \| \mathcal{K}\|_{L^1}\| \nabla\rho \|^2_{L^2} \leq 4\|\rho_0\|_{L^1\cap L^{\infty}}  \| \mathcal{K}\|_{L^1}\| \nabla\rho \|^2_{L^2},
\end{split}
\end{equation} 
where we applied the bound $\|\rho\|_{L^{\infty}([0,T]; L^{\infty})}\leq 2\|\rho_0\|_{L^1\cap L^{\infty}}$ from Theorem \ref{Mild Solution Proof} with $p=\infty$ to get the last inequality.  Substituting (\ref{term2boundsingle}) and (\ref{visctermsingle}) into (\ref{energyibpsingle}) gives 
\begin{equation*}
\begin{split}
&\frac{d}{dt} \|\rho(t)\|^2_{L^2} \leq - 2\nu\| \nabla\rho \|^2_{L^2} + 4\|\rho_0\|_{L^1\cap L^{\infty}}  \| \mathcal{K}\|_{L^1}\| \nabla\rho \|^2_{L^2}\\
&\qquad = - 2\left(\nu -  2\|\rho_0\|_{L^1\cap L^{\infty}}\| \mathcal{K} \|_{L^{1}} \right)\| \nabla\rho \|^2_{L^2}.
\end{split}
\end{equation*}
We conclude that on $[0,T]$,
\begin{equation*}
\begin{split}
&\frac{d}{dt} \|\rho(t)\|^2_{L^2} \leq  -2\eta\| \nabla\rho \|^2_{L^2},
\end{split}
\end{equation*} 
where 
$$\eta =\nu -  2\|\rho_0\|_{L^1\cap L^{\infty}}\| \mathcal{K} \|_{L^{1}}.$$

To establish (\ref{L2decaysingle}) and complete the proof of Lemma \ref{energydecreasesingle}, we apply Lemma \ref{Nash inequality} to conclude that
\begin{equation*}
\begin{split}
&\| \rho \|^{2(1+2/d)}_{L^2}\leq C\| \rho \|^{4/d}_{L^1} \| \nabla \rho \|^2_{L^2} ,\\
\end{split}
\end{equation*}
so that, by conservation of mass,
\begin{equation*}
\begin{split}
&\| \nabla \rho \|^2_{L^2} \geq C\|\rho_0\|^{-4/d}_{L^1}\| \rho \|_{L^2}^{2(1+2/d)}.\\
\end{split}
\end{equation*}
We conclude that
$$ \frac{d}{dt} \|\rho(t)\|^2_{L^2} \leq  - C_1\eta(\| \rho \|_{L^2}^{2})^{\left(1+\frac{2}{d}\right)},$$
where $C_1=C\|\rho_0\|^{-4/d}_{L^1}$.  It follows that 
\begin{equation}
\|\rho(t)\|^2_{L^2} \leq \frac{\|\rho_{0}\|^2_{L^2}}{\left(1+ C_1\eta t\|\rho_{0}\|^{4/d}_{L^2}\right)^{d/2}}.
\end{equation}
\end{proof}
We now establish an $L^2$-decay estimate in the multi-species setting.  Our strategy will be similar to that used in the single-species case; however, the multi-species setting presents some difficulties, as now the divergence of the velocity of a given species depends on densities of the other species.  This complication necessitates stronger assumptions on the interaction kernel, as we see in Lemma \ref{energydecrease} below.
\begin{lemma}\label{energydecrease}
Let $T$ be as in Theorem \ref{small data result}, $k\geq 3$, $r>2$, and $N\geq 2$.  Let $\rho$ be a solution to (\ref{PDE}) in $Lip([0,T];W^{k,1} \cap W^{k,\infty}(\R^d))$.  Under these assumptions, there exists a constant $C>0$ depending on $d$, $N$, $r$, and, when $d=2$, the support of $\mathcal{K}$ or $\nabla\mathcal{K}$, such that, if either\\
\\
\textbf{Case 1:} $\mathcal{K}$ is acceptable and $\nu$ satisfies
\begin{align}\label{viscosityassumption}
\nu_{min} >
\begin{cases}
C \| H\|\|\mathcal{K}\|_{L^{r}}\|\nabla\rho_0\|_{L^1},  &d=2,\\
C\| H\|\|\mathcal{K}\|_{L^{3}}\|\nabla\rho_0\|_{L^1},  &d=3,
\end{cases}
\end{align}
or\\
\\
\textbf{Case 2:} $\mathcal{K}$ is allowable and $\nu$ satisfies
\begin{align}\label{viscosityassumption1}
\nu_{min} >
\begin{cases}
C \| H\|\|\nabla\mathcal{K}\|_{L^{r}}\|\rho_0\|_{L^1},  &d=2,\\
C\| H\|\|\nabla\mathcal{K}\|_{L^{3}}\|\rho_0\|_{L^1},  &d=3,
\end{cases}
\end{align}
then for each $i$ between $1$ and $N$, 
\begin{equation}\label{L2decay}
\|\rho_i(t)\|_{L^2} \leq \frac{\|\rho_{i,0}\|_{L^2}}{\left(1+ C_1\eta_d t\|\rho_{i,0}\|^{4/d}_{L^2}\right)^{d/4}}.
\end{equation}
In (\ref{L2decay}), $C_1=C\|\rho_0\|_{L^1}^{-4/d}$, while in Case 1,
\begin{equation}\label{visccase1}
\eta_d =
\begin{cases}
\nu_{min} -  C\| H \| \| \nabla\rho_0 \|_{L^1} \| \mathcal{K} \|_{L^{r}} >0, &d=2,\\
\nu_{min} -  C\| H \| \| \nabla\rho_0 \|_{L^1} \| \mathcal{K} \|_{L^{3}} >0, &d=3,
\end{cases}
\end{equation}
and in Case 2,
\begin{equation}\label{visccase2}
\eta_d =
\begin{cases}
\nu_{min} -  C\| H \| \| \rho_0 \|_{L^1} \| \nabla\mathcal{K} \|_{L^{r}} >0, &d=2,\\
\nu_{min} -  C\| H \| \| \rho_0 \|_{L^1} \| \nabla\mathcal{K} \|_{L^{3}} >0, &d=3.
\end{cases}
\end{equation}
\end{lemma}
\begin{proof}
Fix $i$ between $1$ and $N$, let $\mathcal{K}$ be acceptable, and assume (\ref{viscosityassumption}) holds.  As in the proof of Lemma \ref{energydecreasesingle}, we multiply (\ref{PDE}) by $\rho_i$ and integrate in the spatial variable, which gives
\begin{equation}\label{energyibp}
\begin{split}
&\frac{d}{dt} \|\rho_i(t)\|^2_{L^2} = 2\int_{\R^d} (\nu_i\rho_i\Delta\rho_i - \rho_i\nabla\cdot(u_i\rho_i) ) \, dx.\\
\end{split}
\end{equation}   
Integrating the diffusion term by parts gives 
\begin{equation}\label{viscterm}
\begin{split}
&2\nu_i\int_{\R^d} \rho_i\Delta\rho_i \, dx = -2\nu_i  \int_{\R^d} |\nabla\rho_i|^2 \, dx.
\end{split}
\end{equation} 
We now examine $2\int_{\R^d} \rho_i\nabla\cdot(u_i\rho_i)  \, dx$.  Again following the proof of Lemma \ref{energydecreasesingle}, we apply the product rule and integrate by parts, giving
\begin{equation}\label{ibpterm2}
\begin{split}
&2\int_{\R^d} \rho_i\nabla\cdot(u_i\rho_i)  \, dx = \int_{\R^d} \rho^{2}_i\nabla\cdot u_i  \, dx = \int_{\R^d} \rho^{2}_i \left(\sum_{j=1}^N h_{ij}\Delta\mathcal{K}_{ij}\ast\rho_j \right) \, dx\\
& \qquad= -\int_{\R^d} \nabla(\rho^{2}_i)\cdot \left(\sum_{j=1}^N h_{ij}\mathcal{K}_{ij}\ast\nabla\rho_j \right) \, dx\\
&\qquad = -2\int_{\R^d} \rho_i\nabla\rho_i\cdot\left(\sum_{j=1}^N h_{ij}\mathcal{K}_{ij}\ast\nabla\rho_j \right) \, dx.
\end{split}
\end{equation} 
We now consider the cases $d=2$ and $d=3$ separately.  We first assume $d=2$.  Let $\delta>0$ be such that supp $\mathcal{K}_{ij} \subseteq B_{\delta}(0)$ for every $i,j$, $1\leq i,j\leq N$.  Fix $r>2$, and assume $1/r + 1/s =1$.  Expanding the convolution in (\ref{ibpterm2}) and utilizing the compact support of $\mathcal{K}_{ij}$ and Fubini's Theorem, we find that
\begin{equation*}
\begin{split}
& \left | 2\int_{\R^2} \rho_i\nabla\cdot(u_i\rho_i)  \, dx \right | \\
&= \left |2\int_{\R^2}\sum_{j=1}^N \left(h_{ij}\nabla\rho_j(y) \left(\int_{B_{\delta}(y)} \rho_i(x)\nabla\rho_i (x) \mathcal{K}_{ij}(x-y) \, dx \right) \right) \, dy \right |\\
&\qquad \leq C\| H \| \| \nabla\rho \|_{L^1} \| \mathcal{K} \|_{L^{r}} \sup_{y\in\R^2} \| \rho_i \cdot \nabla\rho_i \|_{L^s(B_{\delta}(y))}\\
&\qquad \leq C\| H \| \| \nabla\rho \|_{L^1} \| \mathcal{K} \|_{L^{r}} \sup_{y\in\R^2} \left(\| \rho_i \|_{L^q(B_{\delta}(y))} \|\nabla\rho_i \|_{L^2(B_{\delta}(y))} \right),
\end{split}
\end{equation*} 
where $1/2 + 1/q = 1/s$.  Above we used H\"older's inequality and translation invariance of the $L^r$-norm.  Now observe that by Lemma \ref{Sobolev inequality} and H\"older's inequality,
\begin{equation*}
\| \rho_i \|_{L^q(B_{\delta}(y))} \leq \| \nabla \rho_i \|_{L^{\frac{2q}{2+q}}(B_{\delta}(y))} \leq (m(B_{\delta}(y)))^{\frac{2}{2+q}}\| \nabla \rho_i \|_{L^2(B_{\delta}(y))}.
\end{equation*}
We conclude that
\begin{equation}\label{term2bound}
2\int_{\R^2} \rho_i\nabla\cdot(u_i\rho_i)  \, dx \leq C\delta^{\frac{4}{2+q}}\| H \| \| \nabla\rho \|_{L^1} \| \mathcal{K} \|_{L^{r}} \| \nabla\rho_i \|^2_{L^2},
\end{equation}
where $C$ depends on $d$, $N$ and $r$.  Substituting (\ref{term2bound}) and (\ref{viscterm}) into (\ref{energyibp}) gives, for $d=2$, 
\begin{equation*}
\begin{split}
&\frac{d}{dt} \|\rho_i(t)\|^2_{L^2} \leq - 2\nu_i\| \nabla\rho_i \|^2_{L^2} + C\delta^{\frac{4}{2+q}}\| H \| \| \nabla\rho \|_{L^1} \| \mathcal{K} \|_{L^{r}} \| \nabla\rho_i \|^2_{L^2}\\
&\qquad = - 2\left(\nu_i -  C\delta^{\frac{4}{2+q}}\| H \| \| \nabla\rho \|_{L^1} \| \mathcal{K} \|_{L^{r}} \right)\| \nabla\rho_i \|^2_{L^2}.
\end{split}
\end{equation*}
But by (\ref{gradient bound 22}) below,
\begin{equation}\label{gradforvisc1}
\begin{split}
&\| \nabla \rho \|_{L^{\infty}([0,T];L^{1})} \leq C\| \nabla \rho_0 \|_{L^{1}}.
\end{split}
\end{equation}
We conclude that, when $d=2$,
\begin{equation*}
\begin{split}
&\frac{d}{dt} \|\rho_i(t)\|^2_{L^2} \leq  - 2\left(\nu_{min} -  C\delta^{\frac{4}{2+q}}\| H \| \| \nabla\rho_0 \|_{L^1} \| \mathcal{K} \|_{L^{r}} \right)\| \nabla\rho_i \|^2_{L^2},
\end{split}
\end{equation*}
where $C$ depends on $d$, $N$, and $r$.  

We now consider the case $d=3$.  In this case, one again has 
\begin{equation*}
\begin{split}
& \left | 2\int_{\R^3} \rho_i\nabla\cdot(u_i\rho_i)  \, dx \right | \\
&= \left |2\int_{\R^3}\sum_{j=1}^N \left(h_{ij}\nabla\rho_j(y) \left(\int_{\R^3} \rho_i(x)\nabla\rho_i (x) \mathcal{K}_{ij}(x-y) \, dx \right) \right) \, dy \right |\\
&\qquad \leq C\|H\|\| \nabla\rho \|_{L^1} \sup_{1\leq j\leq N}\sup_{y\in\R^3}\left|\int_{\R^3} \rho_i(x)\nabla\rho_i(x) \mathcal{K}_{ij}(x-y) \, dx \right |.
\end{split}
\end{equation*} 
Note that for each $y\in\R^3$ and $1\leq j\leq N$, by translation invariance of the $L^3$-norm and H\"older's inequality,
\begin{equation*}
\begin{split}
&\left|\int_{\R^3} \rho_i(x)\nabla\rho_i(x) \mathcal{K}_{ij}(x-y) \, dx \right | \leq \| \mathcal{K} \|_{L^3} \|\rho_i \nabla\rho_i \|_{L^{3/2}}\\
&\qquad \leq  \| \mathcal{K}\|_{L^3} \|\rho_i \|_{L^6} \|\nabla\rho_i \|_{L^{2}}  \leq  \| \mathcal{K} \|_{L^3} \|\nabla\rho_i \|^2_{L^2} , 
\end{split}
\end{equation*} 
where we applied Lemma \ref{Sobolev inequality} to get the last inequality.  Thus, 
\begin{equation}\label{term2boundd3}
2\int_{\R^2} \rho_i\nabla\cdot(u_i\rho_i)  \, dx \leq C\| H \| \| \nabla\rho \|_{L^1} \| \mathcal{K} \|_{L^{3}} \| \nabla\rho_i \|^2_{L^2},
\end{equation}  
where $C$ depends on $N$.  Substituting (\ref{term2boundd3}) and (\ref{viscterm}) into (\ref{energyibp}) gives, for $d=3$, 
\begin{equation*}
\begin{split}
&\frac{d}{dt} \|\rho_i(t)\|^2_{L^2} \leq - 2\nu_i\| \nabla\rho_i \|^2_{L^2} + C\| H \| \| \nabla\rho \|_{L^1} \| \mathcal{K} \|_{L^{3}} \| \nabla\rho_i \|^2_{L^2}\\
&\qquad = - 2\left(\nu_i -  C\| H \| \| \nabla\rho \|_{L^1} \| \mathcal{K} \|_{L^{3}} \right)\| \nabla\rho_i \|^2_{L^2}.
\end{split}
\end{equation*}
Again by (\ref{gradient bound 22}) below,
\begin{equation*}
\begin{split}
&\| \nabla \rho \|_{L^{\infty}([0,T];L^{1})} \leq  C\| \nabla \rho_0 \|_{L^{1}}.\\
\end{split}
\end{equation*}
We conclude that for $d=2$, $3$, on $[0,T]$,
\begin{equation*}
\begin{split}
&\frac{d}{dt} \|\rho_i(t)\|^2_{L^2} \leq  -2\eta_d\| \nabla\rho_i \|^2_{L^2},
\end{split}
\end{equation*} 
where 
\begin{align}
\eta_d =
\begin{cases}
\nu_{min} -  C\delta^{\frac{4}{2+q}}\| H \| \| \nabla\rho_0 \|_{L^1} \| \mathcal{K} \|_{L^{r}} >0, &d=2,\\
\nu_{min} -  C\| H \| \| \nabla\rho_0 \|_{L^1} \| \mathcal{K} \|_{L^{3}} >0, &d=3,
\end{cases}
\end{align}
and $C$ depends on $d$, $N$ and $r$.

To establish (\ref{L2decay}) and complete the proof of Lemma \ref{energydecrease}, we again apply Lemma \ref{Nash inequality}, exactly as in the proof of Lemma \ref{energydecreasesingle}, to conclude that
$$ \frac{d}{dt} \|\rho_i(t)\|^2_{L^2} \leq  - C_1\eta_d(\| \rho_i \|_{L^2}^{2})^{\left(1+\frac{2}{d}\right)},$$
where $C_1=C\|\rho_{i,0}\|^{-4/d}_{L^1}$.  It follows that
\begin{equation}
\|\rho_i(t)\|^2_{L^2} \leq \frac{\|\rho_{i,0}\|^2_{L^2}}{\left(1+ C_1\eta_d t\|\rho_{i,0}\|^{4/d}_{L^2}\right)^{d/2}}.
\end{equation}

We now show that (\ref{L2decay}) holds under the assumption that $\mathcal{K}$ is allowable and $\nu$ satisfies (\ref{viscosityassumption1}).  We follow a strategy identical to that above, except in place of the equality in (\ref{ibpterm2}), we write
$$2\int_{\R^d} \rho_i\nabla\cdot(u_i\rho_i)  \, dx = -2\int_{\R^d} \rho_i\nabla\rho_i\cdot\left(\sum_{j=1}^N h_{ij}\nabla\mathcal{K}_{ij}\ast\rho_j \right) \, dx.$$
The remainder of the proof is identical to case $1$ above, but with $\nabla\rho_j$ replaced by $\rho_j$ and $\mathcal{K}_{ij}$ replaced by $\nabla\mathcal{K}_{ij}$.  Note also that, rather than bounding $\|\nabla\rho(t)\|_{L^1}$ as in (\ref{gradforvisc1}), we  can now use conversation of mass to control $\|\rho(t)\|_{L^1}$ by $\|\rho_0\|_{L^1}$.  This completes the proof. 
\end{proof}
One could use the same argument as that in the proof of Lemma \ref{energydecreasesingle} or Lemma \ref{energydecrease} to prove an analogous $L^p$ decay estimate for each $p<\infty$.  However, for $\eta$ (respectively, $\eta_d$) to remain non-negative, the argument requires that $\nu$ grow linearly with $p$.  Thus, one cannot pass to the limit as $p$ approaches infinity to establish an $L^{\infty}$ maximum principle.  For this reason, we will instead utilize $L^2$-decay and the mild formulation to prove a bound on the $L^{\infty}$-norm of the density in Lemma \ref{sup bounds lemma} below.  For now, we restrict our attention to the case $N=1$.  We address the case $N\geq 2$ in Lemma \ref{nablarhoinfty}.     
\begin{lemma}\label{sup bounds lemma}
Let $T>0$ be as in Theorem \ref{small data result}.  Assume $N=1$ and $\mathcal{K}\in \mathcal{A}$, and assume that $\rho\in Lip([0,T]; W^{k,1}\cap W^{k,\infty}(\R^d))$ is a solution to (\ref{aggregation equation}) on $[0,T]$.  Then there exists a constant $C>0$, depending only on $N$ and $d$, such that
\begin{equation}\label{sup bound}
\| \rho \|_{L^{\infty}([0,T];L^{\infty})} \leq C\| \rho_0 \|_{L^{\infty}}.
\end{equation}

Now assume that $N =1$ and $\mathcal{K}\in \mathcal{A}\cap L^1(\R^d)$, and let $T>0$ be arbitrary.  If $\rho\in Lip([0,T]; W^{k,1}\cap W^{k,\infty}(\R^d))$ is a solution of (\ref{aggregation equation}) on $[0,T]$, and if (\ref{L2decaysingle}) holds for all $t\in[0,T]$, then there exists a constant $C>0$, depending only on $N$ and $d$, such that
\begin{equation}\label{sup bound 2}
\| \rho \|_{L^{\infty}([0,T];L^{\infty})} \leq \|  \rho_0 \|_{L^{\infty}} + \left(C\eta^{-1/2}\nu^{-1/2}\|\nabla \mathcal{K}\|_{L^2}\|\rho_0\|_{L^{1}}^{\frac{2}{d}}\|\rho_0\|_{L^{2}}^{1-\frac{2}{d}}\right)\| \rho \|_{L^{\infty}([0,T];L^{\infty})},
\end{equation}  
where $\eta$ is as in Lemma \ref{energydecreasesingle}.
\end{lemma}     
\begin{remark}\label{constantlessthan1}We will apply Lemma \ref{sup bounds lemma} in the proof of Theorem \ref{smallHexistencesingle}.  Note that, under the assumptions on $\nu$ in Theorem \ref{smallHexistencesingle}, one can make $$C\eta^{-1/2}\nu^{-1/2}\|\nabla \mathcal{K}\|_{L^2}\|\rho_0\|_{L^{1}}^{\frac{2}{d}}\|\rho_0\|_{L^{2}}^{1-\frac{2}{d}} <1,$$
which then yields the bound $$ \| \rho \|_{L^{\infty}([0,T];L^{\infty})} \leq C \|  \rho_0 \|_{L^{\infty}}$$ for a constant $C > 1$ depending only on $d$ and $N$.  Lemmas \ref{gradient bounds lemma} and \ref{nablarhoinfty} below will be employed similarly, the first in the proof of Theorem \ref{smallHexistence}, and the second in the proofs of Theorems \ref{comparison1} and \ref{comparison2}.   
\end{remark} 
\Obsolete{\begin{remark}
As one might expect, an analogous statement to (\ref{sup bound 2}) holds when $N=1$.  We have
\begin{equation}\label{sup bound 2 single}
\| \rho \|_{L^{\infty}([0,T];L^{\infty})} \leq \|  \rho_0 \|_{L^{\infty}} + \left(C\eta^{-1/2}\nu^{-1/2}\|\nabla \mathcal{K}\|_{L^2}\|\rho_0\|^{2/d}_{L^1}\|\rho_0\|_{L^{2}}^{1-\frac{2}{d}}\right)\| \rho \|_{L^{\infty}([0,T];L^{\infty})}.
\end{equation}
We omit the proof of (\ref{sup bound 2 single}), since it is identical to the proof of (\ref{sup bound 2}), which we give below.  We make use of (\ref{sup bound 2 single}) in the proof of Theorem \ref{smallHexistencesingle}.
\end{remark} }
\begin{proof}
We first prove (\ref{sup bound}).  We take the $L^{\infty}$-norm of the mild formulation (\ref{mild solution}) and apply Proposition \ref{semigroup bounds}.  This gives
\begin{equation}\label{mildinftybound}
\begin{split}
&\|\rho(t) \|_{L^{\infty}} \leq \| \rho_0 \|_{L^{\infty}} +  C\nu^{-1/2}\int_0^t (t-\tau)^{-1/2}\norm{(u\rho)(\tau)}{L^{\infty}}\, d\tau\\
&\qquad \leq \| \rho_0 \|_{L^{\infty}} +  C\nu^{-1/2}\int_0^t (t-\tau)^{-1/2}\norm{u(\tau)}{L^{\infty}}\norm{\rho(\tau)}{L^{\infty}}\, d\tau.
\end{split}
\end{equation}
It follows by Young's inequality that for each $\tau\in [0,t]$,
\begin{equation*}
\begin{split}
&\| u (\tau) \|_{L^{\infty}} \leq  \| \nabla\mathcal{K}\|_{L^2}\| \rho(\tau) \|_{L^{2}}.
\end{split}
\end{equation*}
Substituting this estimate into (\ref{mildinftybound}) and integrating in time gives
\begin{equation*}
\| \rho \|_{L^{\infty}([0,T];L^{\infty})} \leq \| \rho_0 \|_{L^{\infty}} + \left ( C\nu^{-1/2}\| \nabla\mathcal{K}\|_{L^2} T^{1/2} \|\rho\|_{L^{\infty}([0,T]; L^2)}\right)\| \rho \|_{L^{\infty}([0,T];L^{\infty})}.
\end{equation*}
We now apply the bound $\|\rho\|_{L^{\infty}([0,T]; L^2)} \leq 2\|\rho_0\|_{L^1\cap L^{2}}$ from Theorem \ref{Mild Solution Proof} with $p=2$ and the assumption on $T$ in Theorem \ref{Mild Solution Proof}.  We conclude that 
\begin{equation*}
\| \rho \|_{L^{\infty}([0,T];L^{\infty})} \leq \| \rho_0 \|_{L^{\infty}} +C_0\| \rho \|_{L^{\infty}([0,T];L^{\infty})},
\end{equation*}
with $C_0\in (0,1)$.  Subtracting $C_0\| \rho \|_{L^{\infty}([0,T];L^{\infty})}$ from both sides and dividing through by $1-C_0$ gives (\ref{sup bound}).

We now establish (\ref{sup bound 2}).  We follow the proof of (\ref{sup bound}) to get the bound
\begin{equation}\label{grad bound 4}
\begin{split}
&\qquad\| \rho(t) \|_{L^{\infty}} \leq \| \rho_0 \|_{L^{\infty}} + \\
&C\nu^{-1/2}\int_0^t (t-\tau)^{-1/2}\| \rho (\tau)\|_{L^{\infty}}\|\nabla\mathcal{K}\|_{L^2}\|\rho(\tau)\|_{L^2} \, d\tau.
\end{split}
\end{equation}   
It follows from (\ref{L2decaysingle}) that 
\begin{equation}\label{gradient post Gronwall}
\| \rho \|_{L^{\infty}([0,t]; L^{\infty})} \leq \| \rho_0 \|_{L^{\infty}} + \left (C\nu^{-1/2}\|\nabla\mathcal{K}\|_{L^2}\| \rho_0 \|_{L^{2}}F(t) \right)\| \rho \|_{L^{\infty}([0,t]; L^{\infty})},
\end{equation}   
where \begin{equation}\label{F}F(t) = \int_0^t \frac{(t-\tau)^{-1/2}}{\left(1+C_1\eta\tau \| \rho_0 \|_{L^{2}}^{4/d} \right)^{d/4}} \, d\tau,
\end{equation}
with $C_1$ and $\eta$ as in Lemma \ref{energydecreasesingle}. We claim that $F$ belongs to $L^{\infty}([0,\infty))$ for $d=2$ or $3$.  

First assume $d=2$.  Write
\begin{equation*}
\begin{split}
&F(t) = \int_0^t \frac{(t-\tau)^{-1/2}}{\left(1+C_1\eta\tau \| \rho_0 \|_{L^{2}}^{2} \right)^{1/2}} \, d\tau\\
&= \int_0^{t/2} \frac{(t-\tau)^{-1/2}}{\left(1+C_1\eta\tau \| \rho_0 \|_{L^{2}}^{2} \right)^{1/2}} \, d\tau + \int_{t/2}^t \frac{(t-\tau)^{-1/2}}{\left(1+C_1\eta\tau \| \rho_0 \|_{L^{2}}^{2} \right)^{1/2}} \, d\tau\\
& \leq \frac{(t/2)^{-1/2}}{(C_1\eta)^{1/2}\| \rho_0 \|_{L^{2}}}\int_0^{t/2} \frac{1}{ {\tau}^{1/2}} \, d\tau + \frac{(t/2)^{-1/2}}{(C_1\eta)^{1/2}\| \rho_0 \|_{L^{2}}}\int_{t/2}^t (t-\tau)^{-1/2} \, d\tau\\
& \leq  4(C_1\eta)^{-1/2}\| \rho_0 \|^{-1}_{L^{2}}. 
\end{split}
\end{equation*}  
We conclude that $$ F(t) \leq  4(C_1\eta)^{-1/2}\| \rho_0 \|^{-1}_{L^{2}}.$$
For the case $d=3$, observe that
\begin{equation*}
\begin{split}
&F(t) = \int_0^t \frac{(t-\tau)^{-1/2}}{\left(1+C_1\eta\tau \| \rho_0 \|_{L^{2}}^{4/d} \right)^{d/4}} \, d\tau= \int_0^t \frac{(t-\tau)^{-1/2}}{\left(1+C_1\eta\tau \| \rho_0 \|_{L^{2}}^{4/3} \right)^{3/4}} \, d\tau\\
&\qquad \leq \int_0^t \frac{(t-\tau)^{-1/2}}{\left(1+C_1\eta\tau \left(\| \rho_0 \|_{L^{2}}^{2/3}\right)^{2} \right)^{1/2}} \, d\tau.
\end{split}
\end{equation*}
By an argument very similar to the case $d=2$, we have
$$ F(t) \leq  4(C_1\eta)^{-1/2}\| \rho_0 \|^{-\frac{2}{3}}_{L^{2}}.$$
Substituting the estimates for both $d=2$ and $d=3$ into (\ref{gradient post Gronwall}) gives
$$ \| \rho \|_{L^{\infty}([0,T];L^{\infty})} \leq \| \rho_0 \|_{L^{\infty}} + \left(C(C_1\eta)^{-1/2}\nu^{-1/2}\|\nabla\mathcal{K}\|_{L^2}\|\rho_0\|_{L^{2}}^{1-\frac{2}{d}}\right)\| \rho \|_{L^{\infty}([0,T];L^{\infty})}.$$
Recalling that $C_1=\|\rho_0\|_{L^1}^{-4/d}$, we get the desired estimate.
\end{proof}
We now apply a strategy similar to that above to establish a bound on the $L^1$-norm of the density gradient when $N\geq 2$.  We prove the following lemma.
\begin{lemma}\label{gradient bounds lemma}
Let $T>0$ be as in Theorem \ref{small data result}.  Assume that $\mathcal{K}\in \mathcal{A}$, $N\geq 2$, and $\rho\in Lip([0,T]; W^{k,1}\cap W^{k,\infty}(\R^d))$ is a solution of (\ref{PDE}) on $[0,T]$.  Then there exists a constant $C>0$, depending only on $N$ and $d$, such that $\rho$ satisfies the estimate
\begin{equation}\label{gradient bound 22}
\| \nabla \rho \|_{L^{\infty}([0,T];L^{1})} \leq C\| \nabla \rho_0 \|_{L^{1}}.
\end{equation} 

Now assume that $\mathcal{K}$ is acceptable and $N\geq 2$, and let $T>0$ be arbitrary.  If $\rho\in Lip([0,T]; W^{k,1}\cap W^{k,\infty}(\R^d))$ is a solution of (\ref{PDE}) on $[0,T]$, and if (\ref{L2decay}) holds for all $t\in [0,T]$, then there exists a constant $C>0$ such that
\begin{equation}\label{gradient bound 33}
\begin{split}
&\qquad\qquad\| \nabla \rho \|_{L^{\infty}([0,T];L^{1})} \leq \| \nabla \rho_0 \|_{L^{1}}\\
& + \left(C\eta_d^{-1/2}\nu_{min}^{-1/2}\| H \|\|\nabla\mathcal{K}\|_{L^2}\|\rho_0\|_{L^1}^{2/d}\|\rho_0\|_{L^{2}}^{1-\frac{2}{d}}\right)\| \nabla \rho \|_{L^{\infty}([0,T];L^{1})},
\end{split}
\end{equation}  
where $\eta_d$ is as in (\ref{visccase1}).
\end{lemma}      
\begin{proof}
As in the proof of Lemma \ref{sup bounds lemma}, we utilize the mild formulation, H\"older's inequality, and Proposition \ref{semigroup bounds} to conclude that for each $t\in [0,T]$,
\begin{equation}\label{grad bound 333}
\begin{split}
&\| \nabla\rho(t) \|_{L^{1}} \leq \| \nabla\rho_0 \|_{L^{1}} + \\ C\nu_{min}^{-1/2}
&\int_0^t (t-\tau)^{-1/2}(\norm{\rho(\tau)}{L^{2}}\norm{\nabla \cdot u (\tau)}{L^{2}}+\norm{u(\tau)}{L^{\infty}}\norm{\nabla \rho (\tau)}{L^{1}})\, d\tau.
\end{split}
\end{equation}
By Young's inequality, we have 
\begin{equation*}
\begin{split}
&\| u (\tau) \|_{L^{\infty}} \leq C\| H \| \| \nabla\mathcal{K}\|_{L^2}\| \rho(\tau) \|_{L^{2}},\\
&\norm{\nabla \cdot u(\tau)}{L^{2}} \leq C\| H \| \| \nabla\mathcal{K}\|_{L^2} \norm{\nabla \rho(\tau)}{L^{1}}.
\end{split}
\end{equation*}
Substituting these bounds into (\ref{grad bound 333}) and applying the estimate $\|\rho\|_{L^{\infty}([0,T];L^{2})}\leq 2\| \rho_0 \|_{L^1\cap L^{2}}$ from Theorem \ref{Mild Solution Proof} with $p=2$ yields 
\begin{equation*}
\| \nabla \rho \|_{L^{\infty}([0,T];L^{1})} \leq \| \nabla \rho_0 \|_{L^{1}} + \left ( C\nu_{min}^{-1/2}\| H \|\| \nabla\mathcal{K}\|_{L^2} T^{1/2} \| \rho_0 \|_{L^1\cap L^{2}}\right)\| \nabla \rho \|_{L^{\infty}([0,T];L^{1})}.
\end{equation*}
The estimate (\ref{gradient bound 22}) follows from the assumption on $T$ in Theorem \ref{Mild Solution Proof} and an argument very similar to that used to establish (\ref{sup bound}) in Lemma \ref{sup bounds lemma}.

Finally, we prove (\ref{gradient bound 33}).  We follow the proof of (\ref{gradient bound 22}) to get the bound
\begin{equation}\label{grad bound 44}
\begin{split}
&\qquad\| \nabla\rho(t) \|_{L^{1}} \leq \| \nabla\rho_0 \|_{L^{1}} + \\
&C\| H \|\|\nabla\mathcal{K}\|_{L^2}\nu_{min}^{-1/2}\int_0^t (t-\tau)^{-1/2}\| \rho (\tau)\|_{L^{2}}\|\nabla\rho(\tau) \|_{L^{1}} \, d\tau.
\end{split}
\end{equation}   
It follows from (\ref{L2decay}) that 
\begin{equation}\label{gradient post Gronwall1}
\| \nabla\rho \|_{L^{\infty}([0,t]; L^{1})} \leq \| \nabla\rho_0 \|_{L^{1}}+\left (C\| H \|\|\nabla\mathcal{K}\|_{L^2}\nu_{min}^{-1/2}\| \rho_0 \|_{L^{2}}F(t) \right)\| \nabla\rho \|_{L^{\infty}([0,t]; L^{1})},
\end{equation}   
with $F(t)$ as in (\ref{F}), but with $\eta$ replaced by $\eta_d$ as defined in (\ref{visccase1}).  Arguing as in the proof of Lemma \ref{sup bounds lemma}, we conclude that
\begin{equation*}
\begin{split}
&\qquad\qquad \| \nabla\rho \|_{L^{\infty}([0,T];L^{1})} \leq \| \nabla\rho_0 \|_{L^{1}}\\
&+\left (C(C_1\eta_d)^{-1/2}\| H \|\|\nabla\mathcal{K}\|_{L^2}\nu_{min}^{-1/2}\| \rho_0 \|^{1-2/d}_{L^{2}} \right)\| \nabla\rho \|_{L^{\infty}([0,T];L^{1})}.
\end{split}
\end{equation*}
We set $C_1=\|\rho_0\|_{L^1}^{-4/d}$, 
which yields the desired estimate.
\end{proof}

\textbf{Proof of Theorem \ref{smallHexistencesingle}} To prove Theorem \ref{smallHexistencesingle},  we will apply an inductive bootstrapping argument to show that the $L^2$-norm of $\rho$ is non-increasing in time.  We will then be able to use Theorems \ref{Mild Solution Proof} (with $p=2$) and \ref{small data result} to bootstrap in time and obtain a global solution.

For each integer $n\geq 0$, we will show that if $\rho$ is a smooth solution to (\ref{aggregation equation}) on $[0,nT]$ satisfying (\ref{L2decaysingle}) for all $t\in [0,nT]$, then $\rho$ can be extended to a smooth solution to (\ref{aggregation equation}) on $[0,(n+1)T]$ satisfying (\ref{L2decaysingle}) for all $t\in [0,(n+1)T]$. 

First note that the base case $n=0$ follows from Theorem \ref{Mild Solution Proof}, Theorem \ref{small data result}, our assumption on $\nu$, and Lemma \ref{energydecreasesingle}.  In particular, our assumption on $\nu$ ensures the existence of a smooth solution to (\ref{aggregation equation}) on $[0,T]$ satisfying (\ref{L2decaysingle}) on $[0,T]$.

Now assume that $\rho$ is a smooth solution to (\ref{aggregation equation}) on $[0,nT]$ for some $n\geq 1$ satisfying (\ref{L2decaysingle}) for all $t\in[0,nT]$.  We claim that for all $t\in [nT, (n+1)T]$,
\begin{equation}\label{decayinduction}
\|\rho(t)\|_{L^2} \leq \frac{\|\rho_{nT}\|_{L^2}}{\left(1+ C_1\eta t\|\rho_{nT}\|^{4/d}_{L^2}\right)^{d/4}} \leq \frac{\|\rho_{0}\|_{L^2}}{\left(1+ C_1\eta t\|\rho_{0}\|^{4/d}_{L^2}\right)^{d/4}}.
\end{equation}
To see this, note that the conditions of part 2 of Lemma \ref{sup bounds lemma} are satisfied on $[0,nT]$. 
 In particular, (\ref{sup bound 2}) holds.  By our assumption on $\nu$ in Theorem \ref{smallHexistencesingle} (see also Remark \ref{constantlessthan1}),$$
\norm{ \rho (nT)}{L^{\infty}} \leq 2\norm{ \rho_0}{L^{\infty}}.
$$
Again by our assumption on $\nu$ in Theorem \ref{smallHexistencesingle} (with $C>4$), $\nu$ satisfies the conditions of Lemma \ref{energydecreasesingle} with initial data $\rho(nT)$.  Therefore, from Lemma \ref{energydecreasesingle}, we conclude that (\ref{decayinduction}) (and thus (\ref{L2decaysingle})) holds for all $t\in [nT, (n+1)T]$.  It follows from induction, Theorem \ref{Mild Solution Proof}, and Theorem \ref{small data result} that $\rho$ can be extended to a global-in-time solution of (\ref{aggregation equation}) satisfying $\rho \in Lip([0,\infty);W^{k,1}\cap W^{k,\infty}(\R^d))$.  This completes the proof of Theorem \ref{smallHexistencesingle}.

\textbf{Proof of Theorem \ref{smallHexistence}} We complete this section with a proof of Theorem \ref{smallHexistence}.  The proof of Theorem \ref{smallHexistence} is very similar to that of Theorem \ref{smallHexistencesingle}.  The main difference is that, rather than utilizing Lemma \ref{sup bounds lemma} to control the growth of $\|\rho(t)\|_{L^{\infty}}$ with $t$, we must invoke Lemma \ref{gradient bounds lemma} to control $\| \nabla \rho (t)\|_{L^1}$.

We begin with a proof of Case 1.  We again apply an inductive bootstrapping argument to show that, for each integer $n\geq 0$, if $\rho$ is a smooth solution to (\ref{PDE}) satisfying (\ref{L2decay}) for all $t\in [0,nT]$, then $\rho$ can be extended to a smooth solution on $[0,(n+1)T]$, and (\ref{L2decay}) holds for all $t\in [0,(n+1)T]$. 

As in the case of a single species, the base case $n=0$ follows from Theorem \ref{Mild Solution Proof}, Theorem \ref{small data result}, our assumption on $\nu$, and Lemma \ref{energydecrease}.

Now assume that $\rho$ is a smooth solution to (\ref{PDE}) on $[0,nT]$ for some $n\geq 1$ satisfying (\ref{L2decay}) for all $t\in[0,nT]$.  We will show that (\ref{L2decay}) holds for all $t\in [nT, (n+1)T]$.  To see this, note that (\ref{gradient bound 33}) holds for all $t\in[0,nT]$.
The assumption (\ref{viscosityassumptionmain}) on $\nu$ in Theorem \ref{smallHexistence} ensures that 
\begin{equation}\label{controlcase1}
\norm{ \nabla\rho (nT)}{L^{1}} \leq 2\norm{ \nabla\rho_0}{L^{1}}.
\end{equation}
These estimates imply that $\nu$ satisfies the conditions of Lemma \ref{energydecrease} (Case 1) with initial data $\rho(nT)$.  Therefore, as in the proof of Theorem \ref{smallHexistencesingle}, we use Lemma \ref{energydecrease} to conclude that (\ref{L2decay}) holds for all $t\in [nT, (n+1)T]$.  It follows from induction, Theorem \ref{Mild Solution Proof}, and Theorem \ref{small data result} that $\rho$ can be extended to a global-in-time solution of (\ref{PDE}) satisfying $\rho \in Lip([0,\infty);W^{k,1}\cap W^{k,\infty}(\R^d))$.  

The proof of Case 2 of Theorem \ref{smallHexistence} is similar, but we use conservation of mass in place of Lemma \ref{gradient bounds lemma}.  This completes the proof of Theorem \ref{smallHexistence}.


\section{Proofs of Theorems \ref{comparison1} and \ref{comparison2}}\label{comparisons}

Throughout this section, we assume that the interaction kernel $\mathcal{K}$ is ideal (see Definition \ref{ideal}) and $N\geq 2$.  

When proving Theorems \ref{comparison1} and \ref{comparison2}, we make use of the fractional Laplacian, defined as follows.	 
\begin{definition}\label{fractional laplacian}
Let $0<s<1$.  The fractional Laplacian $\Lambda^{2s}$ is defined as
$$
\Lambda^{2s} u(x) = c_{d,s} \text{P.V.} \int_{\R^d} \frac{u(x)-u(y)}{\abs{x-y}^{d+2s}}\,dy = \lim_{\eps \to 0^+} c_{d,s}\int_{\R^d \setminus B_{\eps}(x)}\frac{u(x)-u(y)}{\abs{x-y}^{d+2s}}\,dy .
$$
\end{definition}
The following lemma and its proof can be found in \cite{Stinga}.
\begin{lemma}\label{full Laplace}
If $f\in C^2_B(\R^d)$, then for all $x\in\R^d$,
$$\lim_{s\rightarrow 1^-} \Lambda^{2s}f(x) = -\Delta f(x).$$ 
\end{lemma}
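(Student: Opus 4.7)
The plan is to prove this standard convergence result by working directly from the principal-value integral definition of $\Lambda^{2s}$ and tracking how the normalizing constant $c_{d,s}$ balances the blow-up of the kernel near $y=x$. Since $f\in C^2_B(\R^d)$, a second-order Taylor expansion is available and the integrand can be made absolutely integrable after symmetrization.

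First, I would symmetrize the principal-value integral: by changing variables $y\mapsto x-y$ and averaging,
\begin{equation*}
\Lambda^{2s} f(x) = -\frac{c_{d,s}}{2}\int_{\R^d}\frac{f(x+y)+f(x-y)-2f(x)}{|y|^{d+2s}}\,dy,
\end{equation*}
which converges absolutely near zero because the numerator is $O(|y|^2)$ by Taylor. Then I would split the integral at $|y|=1$. For the outer piece $|y|\geq 1$, the crude bound $|f(x+y)+f(x-y)-2f(x)|\leq 4\|f\|_{L^\infty}$ combined with $\int_{|y|\geq 1}|y|^{-d-2s}\,dy=\omega_{d-1}/(2s)$ gives a bound uniform in $s\in[1/2,1]$; since $c_{d,s}\to 0$ as $s\to 1^-$, this contribution vanishes in the limit.

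For the inner piece $|y|<1$, I would Taylor expand $f(x+y)+f(x-y)-2f(x) = y^{T}D^2f(x)\,y + R(x,y)$ with remainder controlled by the modulus of continuity of $D^2f$ times $|y|^2$. Radial symmetry of $|y|^{-d-2s}$ kills the off-diagonal contributions in the quadratic form and leaves
\begin{equation*}
\int_{|y|<1}\frac{y^T D^2 f(x)\,y}{|y|^{d+2s}}\,dy \;=\; \frac{\Delta f(x)}{d}\int_{|y|<1}|y|^{2-d-2s}\,dy \;=\; \frac{\Delta f(x)\,\omega_{d-1}}{2d(1-s)}.
\end{equation*}
The remainder term $R$ produces a contribution that is $o(1)$ relative to $(1-s)^{-1}$ because $|R(x,y)|/|y|^2\to 0$ as $|y|\to 0$ uniformly on compact sets; multiplied by $c_{d,s}=O(1-s)$, it vanishes in the limit. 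The final step is to invoke the classical asymptotic $c_{d,s}\sim (1-s)\cdot\frac{2d\,\Gamma(d/2)}{\pi^{d/2}}$ as $s\to 1^-$ (from the explicit formula via the pole of $\Gamma(-s)$ at $s=1$), which exactly cancels the $(1-s)^{-1}$ and $\omega_{d-1}/d$ to produce $-\Delta f(x)$.

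The main obstacle is bookkeeping the constants so that the divergent factor $(1-s)^{-1}$ coming from the Taylor term cancels exactly against the vanishing $c_{d,s}$ and the surface measure $\omega_{d-1}$ produces the factor $1$; a secondary subtlety is justifying that the Taylor remainder contributes $o((1-s)^{-1})$ after multiplication by $c_{d,s}$, which requires uniform continuity of $D^2 f$ on compact sets (provided by $f\in C^2_B$) rather than mere boundedness.
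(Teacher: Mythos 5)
Your argument is correct, and it is essentially self-contained, which is worth noting because the paper does not prove this lemma at all: it simply cites Stinga's user's guide, where the limit $\lim_{s\to 1^-}(-\Delta)^s f=-\Delta f$ is obtained through the heat-semigroup representation $(-\Delta)^s f=\frac{1}{\Gamma(-s)}\int_0^\infty\bigl(e^{t\Delta}f-f\bigr)\,\frac{dt}{t^{1+s}}$, in which the delicate normalization is absorbed automatically into the factor $1/\Gamma(-s)$. Your route instead works directly from the pointwise singular-integral definition used in Definition \ref{fractional laplacian}: symmetrization to the second difference, splitting at $|y|=1$, the exact evaluation $\int_{|y|<1}|y|^{-d-2s}\,y^TD^2f(x)\,y\,dy=\frac{\Delta f(x)\,\omega_{d-1}}{2d(1-s)}$, and the asymptotics $c_{d,s}\sim(1-s)\,\frac{2d\,\Gamma(d/2)}{\pi^{d/2}}$ coming from the pole of $\Gamma(-s)$ at $s=1$; the constants do cancel exactly to give $-\Delta f(x)$, and your treatment of the remainder (splitting at a small radius $\delta$ where the modulus of continuity of $D^2f$ on $\overline{B_1(x)}$ is below $\eps$, so that $c_{d,s}$ kills the $\log(1/\delta)$ tail and the $\eps/(1-s)$ piece stays of size $\eps$) is the right argument and uses exactly the regularity $f\in C^2_B$ supplies. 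The trade-off between the two approaches: yours is elementary and transparent but hinges on knowing the explicit normalizing constant (which the paper never writes down, so invoking the standard formula $c_{d,s}=\frac{4^s\Gamma(d/2+s)}{\pi^{d/2}|\Gamma(-s)|}$ is a necessary input), whereas the semigroup proof avoids all bookkeeping with $c_{d,s}$ and $\omega_{d-1}$ and extends with no extra effort to the companion limit $s\to 0^+$.
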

The proofs of Theorems \ref{comparison1} and \ref{comparison2} will also require a bound on $\|\nabla\rho\|_{L^{\infty}([0,T]; L^{\infty})}$.  We establish this bound in Lemma \ref{nablarhoinfty} below.  The proof of Lemma \ref{nablarhoinfty} is very similar to the proofs of Lemmas \ref{sup bounds lemma} and \ref{gradient bounds lemma}, so we omit some of the details.
\begin{lemma}\label{nablarhoinfty}
Let $T>0$ be arbitrary.  If $\rho\in Lip([0,T]; W^{k,1}\cap W^{k,\infty}(\R^d))$ is a solution to (\ref{PDE}) on $[0,T]$, and if (\ref{L2decay}) holds for all $t\in [0,T]$, then there exists a constant $C>0$ such that 
\begin{equation}\label{gradient bound infty}
\begin{split}
& \qquad\qquad\|  \rho \|_{L^{\infty}([0,T];W^{1,\infty})} \leq \| \rho_0 \|_{W^{1,\infty}}\\
& + \left (C\| H \|\nu_{min}^{-1/2}\eta_d^{-1/2}\|\rho_0\|^{2/d}_{L^1}\|\rho_0\|^{1-2/d}_{L^{2}} (\|\nabla\mathcal{K}\|_{L^2}+ \|\Delta\mathcal{K}\|_{L^2}) \right)\|  \rho \|_{L^{\infty}([0,T];W^{1,\infty})},
 \end{split}
\end{equation}
where $\eta_d$ is as in (\ref{visccase2}).
\end{lemma}
\begin{proof}
We use the mild formulation, H\"older's inequality, and Proposition \ref{semigroup bounds} to conclude that for each $t\in [0,T]$,
\begin{equation}\label{grad bound 33}
\begin{split}
&\| \nabla\rho(t) \|_{L^{\infty}} \leq \| \nabla\rho_0 \|_{L^{\infty}} + \\ C\nu_{min}^{-1/2}
&\int_0^t (t-\tau)^{-1/2}(\norm{\rho(\tau)}{L^{\infty}}\norm{\nabla \cdot u (\tau)}{L^{\infty}}+\norm{u(\tau)}{L^{\infty}}\norm{\nabla \rho (\tau)}{L^{\infty}})\, d\tau.
\end{split}
\end{equation}
Moreover,
\begin{equation}\label{grad bound 3333}
\begin{split}
&\| \rho(t) \|_{L^{\infty}} \leq \| \rho_0 \|_{L^{\infty}} + C\nu_{min}^{-1/2}\int_0^t (t-\tau)^{-1/2}\norm{\rho(\tau)}{L^{\infty}}\norm{ u (\tau)}{L^{\infty}} \, d\tau.
\end{split}
\end{equation}
Combining (\ref{grad bound 33}) and (\ref{grad bound 3333}) gives
\begin{equation}\label{grad final}
\begin{split}
&\qquad \| \rho(t) \|_{W^{1,\infty}} \leq \| \rho_0 \|_{W^{1,\infty}}\\
&+ C\nu_{min}^{-1/2}\int_0^t (t-\tau)^{-1/2}\norm{\rho(\tau)}{W^{1,\infty}}\left(\norm{ u (\tau)}{L^{\infty}} + \|\nabla\cdot u(\tau)\|_{L^{\infty}}\right)\, d\tau. 
\end{split}
\end{equation}
By Young's inequality, we have 
\begin{equation*}
\begin{split}
&\| u (\tau) \|_{L^{\infty}} \leq \| H \| \| \nabla\mathcal{K}\|_{L^2}\| \rho(\tau) \|_{L^{2}},\\
&\norm{\nabla \cdot u(\tau)}{L^{\infty}} \leq \| H \| \| \Delta\mathcal{K}\|_{L^2} \norm{\rho(\tau)}{L^{2}}.
\end{split}
\end{equation*}
Substituting these bounds into (\ref{grad final}) yields 
\begin{equation*}
\begin{split}
&\qquad\qquad  \|  \rho \|_{L^{\infty}([0,t];W^{1,\infty})}\leq \| \rho_0 \|_{W^{1,\infty}}\\
&+ \left (\int_0^t C\nu_{min}^{-1/2}(t-\tau)^{-1/2}\| H \|(\| \nabla\mathcal{K}\|_{L^2} +\|\Delta\mathcal{K}\|_{L^2})\| \rho(\tau) \|_{ L^{2}} \, d\tau \right) \|  \rho \|_{L^{\infty}([0,t];W^{1,\infty})}.
\end{split}
\end{equation*}
It follows from (\ref{L2decay}) that 
\begin{equation}\label{infty gradient post Gronwall}
\begin{split}
&\qquad\qquad \| \rho \|_{L^{\infty}([0,t];W^{1,\infty})} \leq \| \rho_0 \|_{W^{1,\infty}} \\
&  + \left (C \nu_{min}^{-1/2} \| H \|(\|\nabla\mathcal{K}\|_{L^2}+\|\Delta\mathcal{K}\|_{L^2})\| \rho_0 \|_{L^{2}}F(t) \right)\| \rho \|_{L^{\infty}([0,t];W^{1,\infty})},
 \end{split}
\end{equation}  
with $F(t)$ as in (\ref{F}), and $\eta_d$ as in (\ref{visccase2}).  We conclude as in the proofs of Lemmas \ref{sup bounds lemma} and \ref{gradient bounds lemma} that 
\begin{equation*}
\begin{split}
&\qquad\qquad\| \rho \|_{L^{\infty}([0,T];W^{1,\infty})} \leq \| \rho_0 \|_{W^{1,\infty}}\\
& +\left (C\nu_{min}^{-1/2}\| H \|(C_1\eta_d)^{-1/2}\| \rho_0 \|^{1-2/d}_{L^{2}} (\|\nabla\mathcal{K}\|_{L^2}+ \|\Delta\mathcal{K}\|_{L^2}) \right)\| \rho \|_{L^{\infty}([0,T];W^{1,\infty})}.
\end{split}
\end{equation*}
To complete the proof, we set $C_1 = \|\rho_0\|_{L^1}^{-4/d}$. 
\end{proof}
\noindent \textbf{Proof of Theorem \ref{comparison1}}. 
We apply a strategy similar to that used in the proof of Lemma 2.5 of \cite{LiRodrigo}.  For each $(x,t)\in \R^d\times [0,T]$, set 
\begin{equation}\label{vdef}
v(x,t) = \theta(x,t)e^{-3Mt},
\end{equation}
where $M>0$ will be chosen later. We will show that $v(x,t)\geq 0$ on $\R^d\times [0,T]$.  Assume for contradiction that there exists $(x,t)\in \R^d\times [0,T]$ such that $v(x,t) < 0$. Since $\theta$ belongs to $Lip([0,T];W^{k,1}\cap W^{k,\infty}(\R^d))$, $v$ must belong to the same space.  By the Sobolev embedding $W^{3,p}(\R^d) \hookrightarrow C^2_B(\R^d)$ for $p\in (d,\infty)$, $v$ belongs to $Lip( [0,T]; C^1_B(\R^d))$.  Assume the infimum of $v$ is $-\delta<0$. We claim that $v$ attains its infimum; that is, we claim there exists $(x^*,t^*)$ such that $v(x^*,t^*)=-\delta$.  To see this, note that $v$ belongs to $L^1(\R^d\times [0,T])$ and has bounded derivatives, implying that $v$ must vanish at infinity.  But if $v$ did not attain its minimum, then one could construct a sequence $(x_n, t_n)$ with $|x_n|\rightarrow \infty$ and $|t_n|\rightarrow \infty$ and such that $v(x_n, t_n)\rightarrow -\delta$, contradicting the fact that $v$ vanishes at infinity.  This shows that there exists $(x^*,t^*)$ such that $$v(x^*,t^*)=-\delta<0.$$ 

Now note that, given this fixed $t^*$, the minimum value of $\theta(\cdot,t^*)$ on $\R^d$ is achieved at $x^*$ by (\ref{vdef}).     

Observe that $\theta$ must satisfy  
\begin{equation}
\label{PDE 1 multispecies divergence}
\partial_t \theta + \nabla \cdot (u_1\theta) - \nu\Delta\theta = \sum_{i=1}^N \nabla \cdot (\theta_i(u_1-u_i)).
\end{equation}
We evaluate the right-hand-side of (\ref{PDE 1 multispecies divergence}) at $(x^*, t^*)$, supressing the $t^*$ dependence to simplify notation.  This gives 
\begin{equation}\label{RHSexpansion}
\begin{split}
&\sum_{i=1}^N \nabla \cdot (\theta_i(u_1-u_i))(x^*) = \sum_{i=1}^N \nabla \cdot (\theta_i(\nabla\mathcal{K}_1\ast\theta - \nabla\mathcal{K}_i\ast \gamma_i\theta))(x^*)\\
&= \sum_{i=1}^N \nabla\theta_i(x^*)\cdot (\nabla\mathcal{K}_1\ast\theta - \nabla\mathcal{K}_i\ast \gamma_i\theta)(x^*) + \sum_{i=1}^N \theta_i(x^*) (\Delta\mathcal{K}_1\ast\theta - \Delta\mathcal{K}_i\ast \gamma_i\theta)(x^*)\\
& = -\sum_{i=1}^N \nabla\theta_i(x^*)\cdot   \nabla\mathcal{K}_i\ast \gamma_i\theta(x^*) - \sum_{i=1}^N \theta_i(x^*)   \Delta\mathcal{K}_i\ast \gamma_i\theta(x^*) + \theta(x^*) \Delta\mathcal{K}_1\ast\theta(x^*),\\
\end{split}
\end{equation}
where we used that $\theta$ is minimized at $x^*$ to get the third equality. 

Again since $\theta$ achieves its minimum at $x^*$, and since $\mathcal{K}_i$ is ideal for $1\leq i \leq N$, it follows that for every $s\in(1/2,1)$ and for $C_{\mathcal{K}}$ as in Definition \ref{ideal},    
\begin{equation*}
\begin{split}
&\left|\int_{\R^d} \nabla \mathcal{K}_i(x^*-y) (\theta(y)-\theta(x^*))\, dy\right| \\
&\qquad\leq
\int_{\R^d}\abs{\nabla \mathcal{K}_i(x^*-y)}(\theta(y)-\theta(x^*))\, dy
\leq
-C_{\mathcal{K}}\Lambda^{2s}\theta(x^*).
\end{split}
\end{equation*}
It follows that for every $j$, $1\leq j\leq d$, 
\begin{equation}\label{frac Laplace bound}
C_{\mathcal{K}}\Lambda^{2s}\theta(x^*) \leq -\int_{\R^d} \partial_{x_j} \mathcal{K}_i(x^*-y) (\theta(y)-\theta(x^*))\, dy \leq
-C_{\mathcal{K}}\Lambda^{2s}\theta(x^*).
\end{equation}
By an identical argument, we also have
\begin{equation}\label{frac Laplace bound2}
\begin{split}
C_{\mathcal{K}}\Lambda^{2s}\theta(x^*) \leq -\int_{\R^d} \Delta \mathcal{K}_i(x^*-y) (\theta(y)-\theta(x^*))\, dy \leq
-C_{\mathcal{K}}\Lambda^{2s}\theta(x^*)
\end{split}
\end{equation}
for all $i$ between $1$ and $N$.  By Lemma \ref{full Laplace}, passing to the limit as $s\rightarrow 1^-$ in (\ref{frac Laplace bound}) and (\ref{frac Laplace bound2}) yields the inequalities
\begin{equation}\label{Laplace bound}
-C_{\mathcal{K}}\Delta\theta(x^*) \leq -\int_{\R^d} \partial_{x_j} \mathcal{K}_i(x^*-y) (\theta(y)-\theta(x^*))\, dy \leq
C_{\mathcal{K}}\Delta\theta(x^*),
\end{equation}
and
\begin{equation}\label{Laplace bound2}
\begin{split}
-C_{\mathcal{K}}\Delta\theta(x^*) \leq -\int_{\R^d} \Delta \mathcal{K}_i(x^*-y) (\theta(y)-\theta(x^*))\, dy \leq
C_{\mathcal{K}}\Delta\theta(x^*)
\end{split}
\end{equation}
for all $i$ between $1$ and $N$.

Set 
\begin{equation*}
\begin{split}
&G(x,t)
=
-\sum_{i=1}^N\left( \gamma_i\nabla\theta_i(x,t)\cdot \int_{\R^d}\nabla \mathcal{K}_i(x-y)\, dy + \gamma_i\theta_i(x,t) \int_{\R^d} \Delta \mathcal{K}_i(x-y)\, dy\right)\\
 &\qquad \qquad + \Delta \mathcal{K}_1\ast\theta(x),
\end{split}
\end{equation*}
and set $$ \psi(x,t) = \sum_{i=1}^N C_{\mathcal{K}}\gamma_i\theta_i(x,t).$$

Reintroducing $t^*$, it follows from (\ref{PDE 1 multispecies divergence}), (\ref{RHSexpansion}), (\ref{Laplace bound}) and (\ref{Laplace bound2}) that
\begin{equation}\label{keyinequality}
\begin{split}
&(\partial_t \theta + \nabla \cdot (u_1\theta))(x^*,t^*)
\geq
(\nu-\norm{ \psi(t^*)}{W^{1,\infty}})\Delta\theta(x^*,t^*)
+ (\theta G)(x^*,t^*).
\end{split}
\end{equation}

Since $\mathcal{K}$ is ideal, for all $(x,t)\in \R^d\times [0,T]$,  
\begin{equation*}
\begin{split}
&|G(x,t)| \leq C_{\mathcal{K}}\max_i\|(1+\gamma_i)\theta_i\|_{L^{\infty}([0,T];W^{1,\infty})}(\|\nabla\mathcal{K}\|_{L^1} + \|\Delta\mathcal{K}\|_{L^1}),\\ 
& |\nabla \cdot u_1(x,t)| \leq \|\theta\|_{L^{\infty}([0,T];L^{\infty})}\|\Delta\mathcal{K}\|_{L^1}.
\end{split}
\end{equation*}
For simplicity, redefine $C_{\mathcal{K}}=\max\{1,C_{\mathcal{K}}\}$, and set $$M=C_{\mathcal{K}}\|H\|\max_i|1+\gamma_i|\|\rho\|_{L^{\infty}([0,T];W^{1,\infty})}(\|\nabla\mathcal{K}\|_{L^1} + \|\Delta\mathcal{K}\|_{L^1}).$$ By Lemma \ref{nablarhoinfty} and our assumption on $\nu$ (see also Remark \ref{constantlessthan1}),
\begin{equation*}
\begin{split}
&M \leq C_{\mathcal{K}}\|H\|\max_i \abs{1+\gamma_i}\norm{\rho_0}{W^{1,\infty}}(\|\nabla\mathcal{K}\|_{L^1} + \|\Delta\mathcal{K}\|_{L^1})<\infty.
\end{split}
\end{equation*}
Again by our assumption on $\nu$, 
\begin{equation*} 
\norm{ \psi}{L^{\infty}([0,T];W^{1,\infty})} \leq \sum_{i=1}^N C_{\mathcal{K}}  |\gamma_i h_{1i}|\| \rho_i\|_{L^{\infty}([0,T];W^{1,\infty})} \leq C_{\mathcal{K}} \|H\|  \|\rho_0 \|_{W^{1,\infty}},
\end{equation*}
where $C_{\mathcal{K}}$ now depends on $N$.  Then (\ref{keyinequality}) and our assumption on $\nu$ imply that
\begin{align*}
&(\partial_tv)(x^*,t^*) = -3Mv(x^*,t^*)+\partial_t\theta(x^*,t^*)e^{-3Mt^*} \\
&\qquad \geq
-3Mv(x^*,t^*) -\nabla\cdot(u_1\theta)(x^*,t^*)e^{-3Mt^*}\\
&\qquad +(\nu-\norm{\psi}{L^{\infty}([0,T];W^{1,\infty})})\Delta v(x^*,t^*)- (v G)(x^*,t^*)\\
&=
-(3M+G(x^*,t^*))v(x^*,t^*)-(v\nabla\cdot u_1)(x^*,t^*) + (\nu-\norm{\psi}{L^{\infty}([0,T];W^{1,\infty})})\Delta v(x^*,t^*)\\
&\qquad \geq
-v(x^*,t^*)(3M+G(x^*,t^*)+(\nabla\cdot u_1)(x^*,t^*))
\geq M\delta.
\end{align*}
This contradicts the fact that $v$ is minimized at $(x^*,t^*)$.  It follows that $\theta(x,t) \geq 0$ for all $(x,t) \in \R^d\times[0,T]$. This completes the proof of Theorem \ref{comparison1}. \\

\noindent\textbf{Proof of Theorem \ref{comparison2}} We now turn our attention to Theorem \ref{comparison2}.  The proof is similar to that of Theorem \ref{comparison1}.  Assume $\rho_1$ and $\rho_2$ represent the density of two species satisfying the assumptions of Theorem \ref{comparison2}.  Set $\theta = \rho_1-c_0\rho_2$ and $\overline{u}=u_1-u_2$.  Then $\theta$ and $\overline{u}$ satisfy
\begin{equation}
\label{Two Species Model}
\begin{cases}
\partial_{t}\theta - \nu\Delta \theta + \nabla \cdot (u_2\theta) = -\nabla \cdot (\overline{u}\rho_1)\\
\overline{u} =  ( h_{11}\nabla\mathcal{K}_{11} -h_{21}\nabla \mathcal{K}_{21})\ast \rho_1 + (h_{12}\nabla\mathcal{K}_{12}-h_{22}\nabla\mathcal{K}_{22})\ast \rho_2\\
\theta \vert_{t=0} = \theta_0(x).
\end{cases}
\end{equation}
For each $(x,t)\in \R^d\times [0,T]$, set $$v(x,t) = \theta(x,t)e^{-Mt},$$ where $M>0$ will be chosen later.  Assume for contradiction that the infimum of $v$ on $\R^d\times [0,T]$ is negative.  Denote this infimum by $-\delta$.  As in the proof of Theorem \ref{comparison1}, $v$ must attain its infimum, so there exists $(x^*,t^*) \in \R^d\times [0,T]$ such that $v(x^*,t^*)=-\delta<0$.  Given this fixed $t^*$, the minimum value of $\theta(\cdot,t^*)$ on $\R^d$ must be achieved at $x^*$.  

Set
$$ \mathcal{K}_H =  h_{11}\mathcal{K}_{11} -h_{21} \mathcal{K}_{21}$$ and note that
\begin{equation}\label{ineq1}
\begin{split}
&\overline{u}(x^*) = \nabla \mathcal{K}_H \ast \theta(x^*)\\
&=\int_{\R^d} \nabla \mathcal{K}_H(x^* -y)(\theta(y)-\theta(x^*)) \, dy + \theta(x^*)\int_{\R^d}\nabla \mathcal{K}_H(x^*-y) \, dy.
\end{split}
\end{equation}
Since $\mathcal{K}_H$ is ideal, and since $\theta$ is minimized at $x^*$, as in the proof of Theorem \ref{comparison1}, we have that 
\begin{equation*}\label{Laplace bound 2}
\begin{split}
\left|\int_{\R^d} \nabla \mathcal{K}_H(x^*-y) (\theta(y)-\theta(x^*))\, dy\right| \leq C_{\mathcal{K}}\|H\|\Delta\theta(x^*).
\end{split}
\end{equation*}
Also note that 
\begin{align*}
&\nabla \cdot \overline{u}(x^*) = \int_{\R^d} \Delta \mathcal{K}_H(x^*-y)(\theta(y)-\theta(x^*)) \,dy + \theta(x^*)\int_{\R^d} \Delta \mathcal{K}_H(x^*-y) \, dy,
\end{align*}
and again since $\mathcal{K}_H$ is ideal,
$$  \left | \int_{\R^d} \Delta \mathcal{K}_H(x^*-y)(\theta(y)-\theta(x^*)) \,dy \right | \leq C_{\mathcal{K}}\|H\|\Delta\theta(x^*).$$
Substituting (\ref{ineq1}) into $(\ref{Two Species Model})_1$ and applying the above inequalities gives 
\begin{equation}\label{ineq2}
\begin{split}
&\partial_t\theta(x^*) + \nabla \cdot (u_2\theta)(x^*) \geq (\nu-C_{\mathcal{K}}\|H\|\norm{ \rho_1}{W^{1,\infty}})\Delta\theta(x^*) \\ 
&\qquad- (\theta \nabla \rho_1)(x^*)\cdot\int_{\R^d}\nabla \mathcal{K}_H(x^*-y)\, dy - (\theta \rho_1)(x^*)\int_{\R^d}\Delta \mathcal{K}_H(x^*-y)\, dy\\
&\geq (\nu-C_{\mathcal{K}}\|H\|\norm{ \rho}{W^{1,\infty}})\Delta\theta (x^*)\\ &\qquad- \theta(x^*)\left(\nabla \rho_1(x^*)\cdot\int_{\R^d} \nabla \mathcal{K}_H(x^* - y)\, dy + \rho_1(x^*) \int_{\R^d} \Delta \mathcal{K}_H(x^*-y)\, dy\right).
\end{split}
\end{equation}
Similar to the proof of Theorem \ref{comparison1}, set 
$$
G(x,t) = \rho_1(x,t)\int_{\R^d} \Delta \mathcal{K}_H(x - y)\, dy + \nabla \rho_1(x,t)\cdot \int_{\R^d} \nabla \mathcal{K}_H(x-y)\, dy.
$$
Again since $\mathcal{K}_H$ is ideal, for all $(x,t)\in \R^d\times[0,T]$,
\begin{equation*}
\begin{split}
&|G(x,t)| \leq C_{\mathcal{K}}\|H\|(\|\nabla\mathcal{K}\|_{L^1} + \| \Delta\mathcal{K}\|_{L^1})\|\rho\|_{L^{\infty}([0,T];W^{1,\infty})}, \\
&|\nabla \cdot u_2(x,t)| \leq \|H\|\|\Delta\mathcal{K}\|_{L^1}\|\rho\|_{L^{\infty}([0,T];L^{\infty})}.
\end{split}
\end{equation*}
Now redefine $C_\mathcal{K}=\max\{1,C_{\mathcal{K}}\}$ and set $$ M = C_{\mathcal{K}}\|H\|(\|\nabla\mathcal{K}\|_{L^1} + \| \Delta\mathcal{K}\|_{L^1})\|\rho\|_{L^{\infty}([0,T];W^{1,\infty})}.$$  By Lemma \ref{nablarhoinfty} and our assumption on $\nu$,
\begin{equation*}
\begin{split}
&M \leq C_{\mathcal{K}}\|H\|(\|\nabla\mathcal{K}\|_{L^1} + \| \Delta\mathcal{K}\|_{L^1})\|\rho_0\|_{W^{1,\infty}}<\infty.
\end{split}
\end{equation*}
It follows again by our assumption on $\nu$ that
\begin{align*}
&(\partial_tv)(x^*,t^*) = -3Mv(x^*,t^*)+\partial_t\theta(x^*,t^*)e^{-3Mt^*} \\
&\geq
-3Mv(x^*,t^*) -\nabla\cdot(u_2\theta)(x^*,t^*)e^{-3Mt^*}\\
&\qquad + (\nu-C_{\mathcal{K}}\|H\|\norm{\rho_0}{W^{1,\infty}})\Delta v(x^*,t^*)- (v G)(x^*,t^*)\\
&=
-(3M+G(x^*,t^*))v(x^*,t^*)-(v\nabla\cdot u_2)(x^*,t^*) + (\nu-C_{\mathcal{K}}\|H\|\norm{\rho_0}{W^{1,\infty}})\Delta v(x^*,t^*)\\
&=
-v(x^*,t^*)(3M+G(x^*,t^*)+\nabla\cdot u_2(x^*,t^*))
\geq M\delta.
\end{align*}
This contradicts the fact that $v$ is minimized at $(x^*,t^*)$.  It follows that $\theta(x,t) \geq 0$ for all $(x,t) \in \R^d\times[0,T]$.  This completes the proof of Theorem \ref{comparison2}.   

\section{Concluding Remarks}\label{Concluding Remarks}

Above, we established global existence of smooth solutions to (\ref{PDE}) for a class of singular kernels with a smallness assumption on the initial data, and with no assumptions placed on the interaction matrix $H$.  In addition, we established a type of balance on the species interaction, via assumptions on $H$, which implies that pointwise estimates comparing species densities are preserved by the evolution.  A remaining open problem is to establish a reasonable set of assumptions on the interaction kernel which, when combined with these assumptions on $H$, yields global existence of solutions.  Such a result would perhaps complement those in \cite{JPZ, CSS}, in which the so-called balance condition is placed on the interaction kernels.  It is worth noting, however, that the assumptions on $H$ in Theorems \ref{comparison1} and \ref{comparison2} do not satisfy the balance condition, so they would give a sort of different balance condition from that in the literature, albeit with a smallness assumption on the initial data.

Note further that if, in addition to the assumptions of Theorem \ref{comparison1}, we also assume that $\Delta\mathcal{K}\geq 0$ a.e., then the assumptions on $H$ imply that the divergence of the initial velocity is non-negative.  Our preliminary calculations suggest that non-negative divergence of velocity will be preserved over time, yielding global existence of solutions.  However, kernels satisfying the assumptions of Definition \ref{ideal} (even slightly weakened, with $\nabla\mathcal{K}$ and $\Delta\mathcal{K}$ in $L^1(\R^d)$) and the assumption that $\Delta\mathcal{K}\geq 0$ are extremely difficult to construct.    If one can adequately relax these assumptions, one might obtain global existence of solutions for a sizable set of interaction kernels under the assumptions on $H$ in Theorem \ref{comparison1}. 

\appendix
\section{A Priori Estimates}\label{A Priori Estimates}
Throughout Appendix \ref{A Priori Estimates}, unless otherwise stated, we assume that 
\begin{itemize}
\item{$N\geq 2$}
\item{$\mathcal{K}$ is admissible}
\item{$\rho$ is a solution of \eqref{PDE} in $Lip([0,T]; W^{k,1} \cap W^{k,\infty}(\R^d))$ for $k\geq 3$.} 
\end{itemize}
Under these assumptions, we establish conservation of mass and show that the solution remains non-negative on $[0,T]$ given non-negative initial data.  The results in this section are well-known for solutions to (\ref{aggregation equation}); i.e., the case $N=1$, and we apply them as needed throughout the paper (with $\|H\|=1$) to establish the single-species results. 
 The extensions of these results to the multi-species case are straightforward.  We include them here for completeness.

\subsection{Positivity of Solution} 

The goal of this subsection is to establish positivity of solutions to (\ref{PDE}).  The following lemma and its proof can be found in \cite{LiRodrigo}. In \cite{LiRodrigo}, the authors require $\phi \in C^{1}((0,T];C^2(\R^d))$, but this can be weakened to $\phi\in W^{1,\infty}((0,T];C^2(\R^d))$ as we only need to bound $\sup_{\Omega_T}\abs{\partial_t\phi}$.

Below, to simplify notation, we say $f \in C_{t,x}^{r,s}$ if $f(\cdot,t) \in C^s(\R^d)$ and $f(x,\cdot) \in C^r(X)$, where $X$ denotes the time interval under consideration.  
\begin{lemma}
\label{positivity lemma}
Let $T>0$.  Set $\Omega_T =\R^d\times (0,T]$. Let $\phi$ be a given function in $W^{1,\infty}((0,T];C^2(\R^d)) \cap C_{t,x}^0(\overline{\Omega_T}) \cap L_{t,x}^p(\Omega_T)$ for some $p \in [1,\infty)$. Assume $\rho_0: \R^d \to \R$, $w: \Omega_T \to \R^d$ are also given functions with $\rho_0\in C(\R^d)$ and $w\in C_{t,x}^{0,1}(\Omega_T)$. Further assume that
\begin{itemize}
\item 
on $\Omega_T$, $\phi$ satisfies the point-wise estimate
\begin{equation*}
\begin{cases}
    \partial_t \phi - \nu\Delta\phi \geq - \nabla \cdot(w\phi) \\
    \phi\vert_{t=0} = \rho_0(x)
    \end{cases}
\end{equation*}

\item 
there exists $M_1 \geq 0$ such that
$$
\sup_{\Omega_T} \{ \abs{\partial_t \phi} + \abs{\nabla\phi} + \abs{\nabla^2 \phi} \} + \sup_{\overline{\Omega_T}} \abs{\phi} \leq M_1 < \infty
$$

\item 
$\rho_0(x) \geq 0$ and there exists $M_2 \geq 0$ such that 
$$
\sup_{\overline{\Omega_T}} \abs{\nabla \cdot w} \leq M_2 < \infty.
$$
\end{itemize}
Under the above assumptions, $\phi(x,t)\geq 0 $ on $\overline{\Omega_T}$.
\end{lemma}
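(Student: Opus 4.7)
The plan is to prove the lemma by a nonlocal analogue of the classical parabolic maximum principle. The pivotal observation is that if a bounded smooth function $\varphi$ attains a global spatial minimum at a point $x_0$, then the integrand in Definition \ref{fractional laplacian} is non-positive and hence $\Lambda^{2s}\varphi(x_0) \le 0$; here $\Lambda^{2s}\varphi(x_0)$ is classically defined because $\varphi$ is bounded (handling the behavior at infinity) and because the assumption $s>1/2$, combined with the $C^{1,2}_{t,x}$ regularity and the odd symmetry of the first-order Taylor term at $x_0$, makes the principal value converge near $x_0$.

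First I would reduce to a strict inequality by perturbing. Fix $\lambda > M_2$ and, for each $\epsilon>0$, set
$$
\tilde\rho(x,t) = \rho(x,t) + \epsilon e^{\lambda t},
$$
so that $\tilde\rho(\cdot,0) = \rho_0 + \epsilon \ge \epsilon > 0$. It suffices to show $\tilde\rho>0$ on $\Omega_T$ for every $\epsilon>0$ and then send $\epsilon \to 0^+$. Suppose for contradiction that $\tilde\rho$ takes a non-positive value somewhere in $\Omega_T$, and set
$$
t_0 = \inf\{\, t \in (0,T] : \inf_{x\in\R^d}\tilde\rho(x,t) \le 0 \,\}.
$$
Continuity of $\rho$ and the strict positivity at $t=0$ give $t_0 > 0$ and, by continuity again, $\inf_x \tilde\rho(x,t_0) = 0$.

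I would next argue that this infimum is attained at some finite $x_0\in\R^d$. Any minimizing sequence $\{x_n\}$ satisfies $\tilde\rho(x_n,t_0)\to 0$, and the uniform Lipschitz estimates $\sup_{\Omega_T}(|\nabla\rho|+|\partial_t\rho|)\le M_1$ from the hypothesis force $|\rho|$ to stay above a fixed positive multiple of $\epsilon e^{\lambda t_0}$ on parabolic cylinders of a fixed radius around each $(x_n,t_0)$. If $|x_n|\to\infty$ then a subsequence can be chosen so that these cylinders are disjoint, placing $|\rho|$ above a positive constant on a set of infinite space-time measure and contradicting $\rho \in L^p(\Omega_T)$. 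Hence $x_n \to x_0$ along a subsequence for some $x_0\in\R^d$, and $\tilde\rho(x_0,t_0) = 0$ by continuity. At $(x_0,t_0)$, first-order conditions give $\nabla\rho(x_0,t_0)=0$, $\rho(x_0,t_0) = -\epsilon e^{\lambda t_0}$, and $\partial_t\tilde\rho(x_0,t_0)\le 0$, hence $\partial_t\rho(x_0,t_0) \le -\epsilon\lambda e^{\lambda t_0}$. Together with the nonlocal observation $\Lambda^{2s}\rho(x_0,t_0) = \Lambda^{2s}\tilde\rho(x_0,t_0)\le 0$, the pointwise differential inequality from the hypothesis yields
$$
\partial_t\rho(x_0,t_0) \;\ge\; -\rho(x_0,t_0)\,\nabla\!\cdot u(x_0,t_0) - \nu\Lambda^{2s}\rho(x_0,t_0) \;\ge\; \epsilon e^{\lambda t_0}\,\nabla\!\cdot u(x_0,t_0) \;\ge\; -\epsilon M_2 e^{\lambda t_0}.
$$
Comparing the two bounds forces $\lambda \le M_2$, contradicting the choice $\lambda > M_2$. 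Letting $\epsilon\to 0^+$ then yields $\rho\ge 0$ on $\overline{\Omega_T}$.

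The main obstacle I anticipate is the attainment of the spatial infimum at a finite point, since the domain is unbounded and the only decay information on $\rho$ is the space-time $L^p$ hypothesis; the argument above exploits the uniform Lipschitz bounds precisely to promote $L^p(\Omega_T)$ integrability into a no-escape statement for minimizing sequences. A secondary technical point is the classical pointwise interpretation of $\Lambda^{2s}\rho(x_0,t_0)$, for which the assumption $s > 1/2$ is essential to handle the principal-value singularity via the Taylor cancellation provided by $\nabla\rho(x_0,t_0)=0$ together with the bound on $|\nabla^2\rho|$.
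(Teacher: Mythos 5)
The paper does not actually prove this lemma --- it is imported, statement and proof, from \cite{LiRodrigo} --- so there is no in-paper argument to compare against; judged on its own, your proof is correct and is essentially the standard nonlocal maximum-principle argument for such statements: the exponential perturbation $\tilde\rho=\rho+\epsilon e^{\lambda t}$ with $\lambda>M_2$, the sign $\Lambda^{2s}\rho(x_0,t_0)\le 0$ at a global spatial minimum, and the use of the uniform bounds on $\partial_t\rho$, $\nabla\rho$ to show that a minimizing sequence cannot escape to infinity without contradicting $\rho\in L^p_{t,x}(\Omega_T)$ --- which is exactly the role that hypothesis plays. Three small points to tighten. First, your stated reason for convergence of the principal value is off: $s>1/2$ is not what makes it converge; for any $s\in(0,1)$, the vanishing of $\nabla\rho(x_0,t_0)$ (or just the symmetric cancellation of the first-order term in the P.V.) together with the bound on $|\nabla^2\rho|$ makes the integrand $O(|y-x_0|^{2-d-2s})$ near the singularity, integrable because $s<1$, while boundedness of $\rho$ handles the tail; the restriction $s>1/2$ plays no role here. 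Second, the lemma includes $s=1$, for which Definition \ref{fractional laplacian} does not apply; there the same argument works verbatim once the integral observation is replaced by the second-derivative test $-\Delta\rho(x_0,t_0)\le 0$ at the spatial minimum (or by invoking Lemma \ref{full Laplace}). Third, the assertions that $t_0>0$ and that $\inf_x\tilde\rho(x,t_0)=0$ require the uniform bound $|\partial_t\rho|\le M_1$ (so that $t\mapsto\inf_x\tilde\rho(x,t)$ is continuous), not merely continuity of $\rho$; the bound is in the hypotheses and you use it elsewhere, so this is only a matter of wording.
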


Before applying the Positivity Lemma to \eqref{PDE}, we must establish an upper bound on the divergence of the velocity $u$.

\begin{lemma}
\label{div of velocity is bounded}
Let $T>0$.  If $\rho$ is a solution to \eqref{PDE} with corresponding velocity $u$, then 
$$
\|\nabla \cdot u(t) \|_{L^{\infty}(\R^d\times[0,T])}\leq C\|H\|\|\nabla\mathcal{K}\|_{L^2}\norm{\nabla \rho}{L^{\infty}([0,T];L^1 \cap L^{\infty})}.
$$
\end{lemma}
\begin{proof}
Note that for species $j$ with $u_j =$ ($u_j^1, u_j^2, ... u_j^d$), for each $t\in [0,T]$, 
\begin{equation*}
\begin{split}
& \|\nabla \cdot u_j(t) \|_{L^{\infty}(\R^d\times[0,T])} = \norm{\sum_{i=1}^d \partial_{x_i} u^i_j(t)}{L^{\infty}} \leq \sum_{i=1}^d \norm{\partial_{x_i} u^i_j(t)}{L^{\infty}}\\
&\qquad \leq C\norm{\nabla u_j(t)}{L^{\infty}} \leq C\|H\|\|\nabla\mathcal{K}\|_{L^2}\norm{\nabla \rho(t)}{L^1 \cap L^{\infty}}
\end{split}
\end{equation*}
by Lemma \ref{ConstLawEstimate} with $f=\nabla\rho$.  Taking the supremum over all $t \in [0,T]$ and over all $j$, $1\leq j \leq N$, gives 
$$
\|\nabla \cdot u(t) \|_{L^{\infty}(\R^d\times[0,T])} \leq C\|H\|\|\nabla\mathcal{K}\|_{L^2}\norm{\nabla \rho}{L^{\infty}([0,T]; L^1 \cap L^{\infty})},
$$
which is what we desired to show.
\end{proof}

\begin{corollary}
If $\rho$ is a solution to \eqref{PDE} satisfying $\rho_0(x) \geq 0$ for all $x \in \R^d$, then $\rho(x,t) \geq 0$ on $\R^d\times[0,T]$.
\end{corollary}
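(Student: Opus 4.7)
The plan is to invoke Lemma \ref{positivity lemma} componentwise on each $\rho_i$, $i = 1,\dots,N$. Each $\rho_i$ satisfies the scalar equation
\begin{equation*}
\partial_t \rho_i + \nu_i \Lambda^{2s}\rho_i = -\nabla\cdot(u_i \rho_i), \qquad \rho_i\big|_{t=0}=\rho_{0,i}\geq 0,
\end{equation*}
so if I can check all the hypotheses of the Positivity Lemma for each component, non-negativity follows for every $\rho_i$, and hence for $\rho$. Since $\rho \in C^1([0,T); W^{k,1}\cap W^{k,\infty})$ may not extend continuously up to $t=T$, I would fix an arbitrary $T' \in (0,T)$, apply the lemma on $\Omega_{T'} = \R^d\times(0,T']$, and then let $T' \to T^-$.

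First, I would verify the regularity demands. Because $k\geq 3$, Sobolev embedding applied to $W^{k,\infty}(\R^d)$ gives $\rho_i(\cdot,t)\in C^2(\R^d)$ with derivatives up to order two uniformly bounded in $t$, while the $C^1$-in-time regularity is immediate from the hypothesis. Hence $\rho_i\in C^{1,2}_{t,x}(\Omega_{T'})\cap C^0_{t,x}(\overline{\Omega_{T'}})$; and the $W^{k,1}$ control supplies $L^p_{t,x}$ integrability for, say, $p=1$. The uniform bound
\begin{equation*}
\sup_{\Omega_{T'}}\bigl(|\partial_t \rho_i|+|\nabla\rho_i|+|\nabla^2\rho_i|\bigr)+\sup_{\overline{\Omega_{T'}}}|\rho_i| \leq M_1 <\infty
\end{equation*}
follows from the same regularity together with the equation for $\partial_t \rho_i$.

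Next I would handle the velocity. The hypothesis $u_i\in C^{0,1}_{t,x}(\Omega_{T'})$ follows because $u_i = \nabla(\mathcal{K}_i\ast (H\rho)_i)$ inherits spatial $C^1$ regularity from Lemma \ref{ConstLawEstimate} applied to $\nabla\rho$, and time continuity from $\rho\in C^1([0,T');W^{k,1}\cap W^{k,\infty})$. The divergence bound is exactly Lemma \ref{div of velocity is bounded}, which gives, componentwise,
\begin{equation*}
\sup_{\overline{\Omega_{T'}}}|\nabla\cdot u_i| \lesssim \|\nabla\rho\|_{L^\infty([0,T'];\,L^1\cap L^\infty)} \eqdef M_2 < \infty.
\end{equation*}
With every hypothesis of Lemma \ref{positivity lemma} verified, the lemma yields $\rho_i(x,t)\geq 0$ on $\overline{\Omega_{T'}}$. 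Since $T'<T$ was arbitrary and $i$ was arbitrary, $\rho(x,t)\geq 0$ on $\Omega_T$.

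The argument is essentially just bookkeeping to fit the vector-valued setting into the scalar Positivity Lemma; there is no real obstacle. The only point that needs a moment's care is the mild gap between $C^1([0,T);\cdot)$ and the closed interval $[0,T']$ required to invoke the lemma, which is handled by the $T'<T$ exhaustion described above.
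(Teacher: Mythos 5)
Your proposal is correct and follows exactly the paper's route: the paper's proof simply cites Lemma \ref{positivity lemma} together with Lemma \ref{div of velocity is bounded}, which is precisely what you do, with the componentwise application and the exhaustion by $T'<T$ being reasonable bookkeeping details.
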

\begin{proof} This follows from Lemmas \ref{positivity lemma} and \ref{div of velocity is bounded}.
\end{proof}

\subsection{Conservation of Mass}
We establish conservation of mass for \eqref{PDE} in the following theorem.
\begin{theorem} Let $T>0$.  If $\rho$ is a solution to \eqref{PDE} with $\rho_0(x)\geq 0$, then
$$
\norm{\rho(t)}{L^1} = \norm{\rho_0}{L^1} \text{ for all } t\in [0,T].
$$
\end{theorem}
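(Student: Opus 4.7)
The plan is to exploit the non-negativity (established in the previous corollary) so that for each species $\|\rho_i(t)\|_{L^1}=\int_{\R^d}\rho_i(x,t)\,dx$, and then show that each of these integrals is independent of $t$. Summing over $i$ recovers the $(L^1)^N$-norm conservation. Since $\rho\in C^1([0,T];W^{k,1}\cap W^{k,\infty}(\R^d))$ with $k\geq 3$, every term in \eqref{PDE} can be integrated pointwise in $x$ and differentiated under the integral sign in $t$, so
$$
\frac{d}{dt}\int_{\R^d}\rho_i(x,t)\,dx = -\nu_i\int_{\R^d}\Lambda^{2s}\rho_i(x,t)\,dx - \int_{\R^d}\nabla\cdot(u_i\rho_i)(x,t)\,dx.
$$

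Next I would show that each of the two spatial integrals on the right vanishes. For the divergence term, Lemma \ref{ConstLawEstimate} together with $\rho\in W^{k,\infty}$ gives $u_i\in L^\infty$, and since $\rho_i\in W^{1,1}\cap W^{1,\infty}$ we have $u_i\rho_i\in L^1(\R^d)$ with $\nabla\cdot(u_i\rho_i)\in L^1(\R^d)$; a standard cutoff/approximation together with the divergence theorem (or equivalently evaluating the Fourier transform of a divergence at the origin) shows that $\int_{\R^d}\nabla\cdot(u_i\rho_i)\,dx=0$. For the fractional Laplacian term, the most transparent route is via the Fourier transform: since $\rho_i(\cdot,t)\in L^1(\R^d)$, its Fourier transform is continuous, and
$$
\int_{\R^d}\Lambda^{2s}\rho_i\,dx = \widehat{\Lambda^{2s}\rho_i}(0) = |\xi|^{2s}\widehat{\rho_i}(\xi)\Big|_{\xi=0}=0.
$$
(One can alternatively symmetrize the principal-value integral of Definition \ref{fractional laplacian}: after swapping the roles of $x$ and $y$ and averaging, the integrand is odd and the integral is $0$, which is justified by the regularity and decay of $\rho_i$.)

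Combining these observations gives $\tfrac{d}{dt}\int_{\R^d}\rho_i(x,t)\,dx=0$ on $[0,T]$, hence $\int\rho_i(x,t)\,dx=\int\rho_{i,0}(x)\,dx$ for each $i$. Summing over $i$ and invoking non-negativity yields $\|\rho(t)\|_{L^1}=\|\rho_0\|_{L^1}$. The only non-routine point is verifying that $\int_{\R^d}\Lambda^{2s}\rho_i\,dx=0$; everything else reduces to elementary integration-by-parts once the $W^{k,1}\cap W^{k,\infty}$ regularity is in hand.
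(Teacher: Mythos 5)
Your route is genuinely different from the paper's. The paper never attempts to show $\int_{\R^d}\Lambda^{2s}\rho\,dx=0$ directly: for $s\in(1/2,1)$ it multiplies the equation by the rescaled bump $a_R(x)=a(x/R)$, uses the scaling identity $\Lambda^{2s}a_R(x)=R^{-2s}(\Lambda^{2s}a)(x/R)$ to get $\abs{\Lambda^{2s}a_R}\lesssim R^{-2s}$, moves $\Lambda^{2s}$ and the divergence onto $a_R$, and lets $R\to\infty$, using only $\rho\in L^1$ and $u\rho\in L^1$ (the case $s=1$ is handled separately by direct integration). You instead compute $\frac{d}{dt}\int\rho_i\,dx$ and kill each spatial integral on the Fourier side. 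That is a legitimate alternative, and your treatment of the transport term (cutoff plus $F\in L^1$, $\nabla\cdot F\in L^1$ implies $\int\nabla\cdot F\,dx=0$) is fine.

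The one real gap is in the fractional term. The chain $\int_{\R^d}\Lambda^{2s}\rho_i\,dx=\widehat{\Lambda^{2s}\rho_i}(0)=\abs{\xi}^{2s}\widehat{\rho_i}(\xi)\big|_{\xi=0}$ presupposes $\Lambda^{2s}\rho_i\in L^1(\R^d)$, so that its Fourier transform is a continuous function given by an absolutely convergent integral and the multiplier identity, which a priori holds only distributionally, can be evaluated at $\xi=0$. Continuity of $\widehat{\rho_i}$, which is all you invoke, is not the issue; integrability of $\Lambda^{2s}\rho_i$ does not follow from $\rho_i\in L^1\cap L^{\infty}$ alone and must be argued. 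It can be repaired in at least two ways: (i) read it off from the equation itself, since $\partial_t\rho_i\in C([0,T];W^{k,1})\subset L^1$ and $\nabla\cdot(u_i\rho_i)=\rho_i\,\nabla\cdot u_i+u_i\cdot\nabla\rho_i\in L^1$ by Lemma \ref{ConstLawEstimate}, so $\Lambda^{2s}\rho_i=-\nu_i^{-1}\left(\partial_t\rho_i+\nabla\cdot(u_i\rho_i)\right)\in L^1$; or (ii) prove directly that $\Lambda^{2s}$ maps $W^{2,1}\cap W^{2,\infty}$ into $L^1$ via the symmetric second-difference representation. Relatedly, your parenthetical symmetrization argument is not as innocent as stated: since $s>1/2$, a first-order (Lipschitz) bound on $\rho(x)-\rho(y)$ against the kernel $\abs{x-y}^{-d-2s}$ is not absolutely integrable over $\R^d\times\R^d$ near the diagonal, so Fubini cannot be applied naively to the odd integrand; one must excise $\abs{x-y}>\eps$, use antisymmetry at fixed $\eps$, and pass to the limit with a second-difference domination (using $\nabla^2\rho\in L^1\cap L^{\infty}$). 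With either repair your proof is correct; the paper's cutoff argument buys exactly the avoidance of these integrability questions, at the cost of the (easy) scaling estimate for $\Lambda^{2s}a_R$.
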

\begin{proof} We show that 
\begin{equation}\label{conservation}
\frac{d}{dt}\norm{\rho(t)}{L^1} = \frac{d}{dt} \int_{\R^d} \rho(x,t)\,dx=0 \text{ for all } t\in [0,T].
\end{equation}
Note that for all $t\in[0,T]$, $u(t)$ belongs to $W^{3,\infty}(\R^d)$ by Lemma \ref{ConstLawEstimate}.  Thus $(u\rho)(t)$ and $\nabla\rho(t)$ belong to $W^{1,1}(\R^d)$ for every $t\in [0,T]$, from which it follows that 
\begin{equation*}
\int_{\R^d} \nabla\cdot (u\rho) \, dx= 0 \text { and } \int_{\R^d} \Delta \rho \, dx= 0.
\end{equation*}
The equality (\ref{conservation}) then follows from integrating \eqref{PDE} over $\R^d$.  
\end{proof}


\section{The Mild Solution}\label{The Mild Solution}
In this section of the Appendix, we again assume $\mathcal{K}$ is admissible, and we establish short-time existence and uniqueness of a mild solution to \eqref{PDE}, as in Definition \ref{Mild Solution Def}.  Our strategy is to apply a fixed point argument similar to that in \cite{Wu}.  To simplify notation, as in \cite{Wu}, we set
\begin{equation}\label{nonlinear term}
B(u,\rho)(t) = \int_0^t \nabla e^{\nu\Delta(t-\tau)}(u\rho)(\tau) \, d\tau,
\end{equation}
we fix $p\in [2,\infty]$, and we define
$$ E = L^{\infty}([0,T];L^1 \cap L^{p}(\R^d)).$$

We first establish some useful estimates.
\begin{proposition}
\label{non-linearity bound}
Let $T>0$.  If $u\in L^{\infty}([0,T];L^{\infty}(\R^d))$ and $\rho\in E$, then there exists a constant $C$ depending only on $d$ such that
$$
\norm{B(u,\rho)}{E} \leq C\nu_{min}^{-1/2}T^{\frac{1}{2}}\norm{u}{L^{\infty}([0,T];L^{\infty})}\norm{\rho}{E},
$$
\end{proposition}
\begin{proof}
An application of Young's convolution inequality, Hölder's inequality, and Proposition \ref{semigroup bounds} gives
\begin{align*}
\norm{B(u,\rho)(t)}{L^1 \cap L^{p}}
&\leq
C\nu_{min}^{-1/2}\int_0^t \abs{t-\tau}^{-\frac{1}{2}}\norm{u(\tau)}{L^{\infty}}\norm{\rho(\tau)}{L^1 \cap L^{p}} \, d\tau\\
&\leq C\nu_{min}^{-1/2}t^{\frac{1}{2}}\norm{u}{L^{\infty}([0,T];L^{\infty})}\norm{\rho}{E},
\end{align*}
which concludes the proof.
\end{proof}
We also have the following useful lemma.
\begin{lemma}\label{ConstLawEstimate}
If $f\in L^1\cap L^p(\R^d)$ for $p\geq 2$, then,
$$
\|\nabla((H\circ\mathcal{K})\ast f)\|_{L^{\infty}} \leq \|H\| \|\nabla \mathcal{K} \|_{L^2}\|f\|_{L^1\cap L^{p}}. $$
\end{lemma}
\begin{proof}
By Young's inequality
\begin{equation*}
\norm{\nabla((H\circ\mathcal{K})\ast f)}{L^{\infty}}
\leq
\|H\|\|\nabla \mathcal{K} \|_{L^2} \|f\|_{L^2} \leq \|H\|\|\nabla \mathcal{K} \|_{L^2} \|f\|_{L^1\cap L^p}.
\end{equation*}
\end{proof}
We now apply Proposition \ref{non-linearity bound} and Lemma \ref{ConstLawEstimate} to establish short-time existence and uniqueness of a mild solution.
\begin{theorem}\label{Mild Solution Proof}
Fix $p\in [2,\infty]$.  Given $\nu$, $H$, $\mathcal{K}$, and $\rho_0\in L^1\cap L^{\infty}(\R^d)$, there exists a constant $C\geq 1$, depending only on $d$, such that whenever $T>0$ satisfies 
\begin{equation}\label{smallness}
\nu_{min}^{-1/2}T^{\frac{1}{2}}\|H\|\|\nabla\mathcal{K}\|_{L^2}\norm{\rho_0}{L^1 \cap L^{p}} \leq \frac{1}{4C},
\end{equation}
one can find a unique $\rho \in E$ satisfying \eqref{mild solution} on $[0,T]$ with $\rho(0) = \rho_0$. The solution $\rho$ satisfies
$$
\norm{\rho}{E} \leq 2\norm{\rho_0}{L^1 \cap L^{p}}.
$$
Moreover, one can find a single $T$ which satisfies (\ref{smallness}) for all $p\in [2,\infty]$.
\end{theorem}
\begin{remark}\label{singleT}
We address the final statement in Theorem \ref{Mild Solution Proof} above.  To see why this statement holds, note that $$\|\rho_0\|_{L^1\cap L^p} \leq 2 \|\rho_0\|_{L^1\cap L^{\infty}}$$ for every $p\in[2,\infty]$.  Thus, if we choose $T$ to satisfy 
\begin{equation}\label{smallnessremark}
\nu_{min}^{-1/2}T^{\frac{1}{2}}\|H\|\|\nabla\mathcal{K}\|_{L^2}\norm{\rho_0}{L^1 \cap L^{\infty}} \leq \frac{1}{8C},
\end{equation}
then $T$ also satisfies (\ref{smallness}) for all $p\in[2,\infty]$.
\end{remark}
\begin{proof} (of Theorem \ref{Mild Solution Proof}) Our strategy is to apply a Banach fixed point argument.  Define the map
$$\rho\rightarrow A\rho :=  e^{\nu\Delta t}\rho_0 - \int_0^t \nabla e^{\nu\Delta(t-\tau)}(u\rho)(\tau) \, d\tau,$$
and let $R = 2\norm{\rho_0}{L^1 \cap L^{p}}$.  

We will show that there exists $C\geq 1$ such that, given (\ref{smallness}), $A$ is a strict contraction from $B_R$ into itself.  We first show that $A$ is a strict contraction.  To this end, assume $\rho$ and $\overline{\rho}$ belong to $B_R \subset E$, with $u$ and $\bar{u}$ their corresponding velocities.  By Proposition \ref{non-linearity bound} and Lemma \ref{ConstLawEstimate},
\begin{align*}
\norm{A\rho - A\overline{\rho}}{E}
&\leq
\norm{B(u-\overline{u},\rho)}{E}+\norm{B(\overline{u},\rho-\overline{\rho})}{E}\\
&\leq C\nu_{min}^{-1/2} T^{\frac{1}{2}}\left(\norm{u-\overline{u}}{L^{\infty}([0,T];L^{\infty})}\norm{\rho}{E} + \norm{\overline{u}}{L^{\infty}([0,T];L^{\infty})}\norm{\rho-\overline{\rho}}{E}\right)\\
&\leq C\|H\|\|\nabla\mathcal{K}\|_{L^2}\nu_{min}^{-1/2} T^{\frac{1}{2}}\left(\norm{\rho-\overline{\rho}}{E}\norm{\rho}{E}+ \norm{\overline{\rho}}{E}\norm{\rho-\overline{\rho}}{E}\right)\\
&\leq C\|H\|\|\nabla\mathcal{K}\|_{L^2}\nu_{min}^{-1/2}  2\norm{\rho_0}{L^1 \cap L^{p}}T^{\frac{1}{2}}\norm{\rho-\overline{\rho}}{E}.
\end{align*}
Thus, whenever $\nu_{min}^{-1/2}T^{\frac{1}{2}}\|H\|\|\nabla\mathcal{K}\|_{L^2}\norm{\rho_0}{L^1 \cap L^{p}} \leq \frac{1}{4C}$,
\begin{equation}\label{contraction}
\norm{A\rho-A\overline{\rho}}{E} \leq \frac{1}{2}\norm{\rho-\overline{\rho}}{E}.
\end{equation}
We conclude that $A$ is a strict contraction.  

We now prove that $A$ maps $B_R$ into $B_R$.  We must show that, given $\rho\in B_R$, $\norm{A\rho}{E} \leq R$.  Assume $\rho$ belongs to $B_R$, and let $0 \in E$ denote the zero element.  By Proposition \ref{fractional heat kernel bound}, (\ref{contraction}), and the definition of $R$,
\begin{equation*}
\begin{split}
&\norm{A\rho}{E} 
\leq \norm{A\rho - A0}{E} +\norm{A0}{E} \\
&\qquad \leq \frac{1}{2}\norm{\rho}{E} + \norm{\rho_0}{L^1 \cap L^{p}} \leq \frac{1}{2}\norm{\rho}{E} + \frac{R}{2} \leq R.
\end{split}
\end{equation*}
Thus, $A$ maps $B_R$ into $B_R$.  

By the Banach Fixed Point Theorem, we conclude that there exists $\rho\in B_R$ satisfying $A\rho=\rho$.
\end{proof}
\section{The Classical Solution}\label{The Classical Solution}
In this section, we again assume $\mathcal{K}$ is admissible, and we show that if $\rho$ is a mild solution of \eqref{PDE} with $\rho_0 \in W^{k,1}\cap W^{k,\infty}(\R^d)$ for some $k\geq 3$, then it is a classical solution to (\ref{PDE}).  \Obsolete{We begin with a lemma.

\begin{lemma}\label{induction lemma} Let $k$ be a positive integer.  If $\rho \in W^{k,1}\cap W^{k,\infty}(\R^d)$ and $u=\nabla (H\circ \mathcal{K})\ast \rho)$, then $u$ and $\rho$ satisfy
\begin{equation*}
\max_{|\alpha|=k} \sum_{|\beta|\leq k} \norm{D^{\alpha-\beta}uD^{\beta}\rho}{L^1 \cap L^{\infty}} \leq C(1+ \| \rho\|_{W^{k-1,1}\cap W^{k-1,\infty}})^2(1 + \max_{|\alpha|=k}\norm{D^{\alpha}\rho}{L^1 \cap L^{\infty}}),
\end{equation*}
where $\alpha$ and $\beta$ denote multi-indices in $\mathbb{Z}^d_{\geq 0}$.
\end{lemma}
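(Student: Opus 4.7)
The bound is really a routine application of Lemma \ref{ConstLawEstimate} (which provides an $L^\infty$ estimate on $\nabla(\mathcal{K}*f)$ in terms of $\|f\|_{L^1\cap L^\infty}$) combined with Leibniz-style bookkeeping. The point is to put the unbounded operator $\nabla\mathcal{K}*$ on the $\rho$-factor, move all derivatives onto $\rho$ in that factor, and then bound the product by $L^\infty \times L^1$ or $L^\infty \times L^\infty$ depending on whether we are measuring the $L^1$ or $L^\infty$ norm.

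\textbf{Step 1: Control $D^\gamma u$ pointwise.} Fix a multi-index $\gamma$ with $|\gamma|\leq k$. Since $\rho \in W^{k,1}\cap W^{k,\infty}(\R^d)$, differentiation commutes with convolution, and for each species $i$
\begin{equation*}
D^\gamma u_i \;=\; \nabla\bigl(\mathcal{K}_i * D^\gamma (H\rho)_i\bigr).
\end{equation*}
Applying Lemma \ref{ConstLawEstimate} with $f = D^\gamma (H\rho)_i$ gives
\begin{equation*}
\norm{D^\gamma u}{L^\infty} \;\lesssim\; \norm{D^\gamma (H\rho)}{L^1 \cap L^\infty} \;\lesssim\; \norm{D^\gamma \rho}{L^1 \cap L^\infty},
\end{equation*}
with an implicit constant depending only on $\mathcal{K}$ and $H$.

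\textbf{Step 2: Pointwise bound on each summand.} For any $\alpha$ with $|\alpha|=k$ and any $\beta \leq \alpha$, Step 1 applied to $\gamma = \alpha-\beta$ gives
\begin{equation*}
\norm{D^{\alpha-\beta}u \, D^{\beta}\rho}{L^1 \cap L^\infty} \;\leq\; \norm{D^{\alpha-\beta}u}{L^\infty}\,\norm{D^\beta \rho}{L^1 \cap L^\infty} \;\lesssim\; \norm{D^{\alpha-\beta}\rho}{L^1\cap L^\infty}\,\norm{D^\beta\rho}{L^1 \cap L^\infty}.
\end{equation*}

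\textbf{Step 3: Organize the sum by top versus lower order.} Summing the last inequality over $\beta \leq \alpha$ and separating the cases where exactly one of $\alpha-\beta$, $\beta$ is of order $k$ (i.e.\ $\beta=0$ or $\beta=\alpha$) from the cases $1\leq |\beta|\leq k-1$, we obtain
\begin{equation*}
\sum_{\beta \leq \alpha}\norm{D^{\alpha-\beta}u\, D^\beta \rho}{L^1\cap L^\infty} \;\lesssim\; \norm{\rho}{L^1\cap L^\infty}\max_{|\alpha|=k}\norm{D^\alpha\rho}{L^1\cap L^\infty} + \norm{\rho}{W^{k-1,1}\cap W^{k-1,\infty}}^2.
\end{equation*}
Since $\norm{\rho}{L^1\cap L^\infty} \leq \norm{\rho}{W^{k-1,1}\cap W^{k-1,\infty}}$ (because $k\geq 1$), this is bounded by
\begin{equation*}
C\bigl(1+\norm{\rho}{W^{k-1,1}\cap W^{k-1,\infty}}\bigr)^2\bigl(1+\max_{|\alpha|=k}\norm{D^\alpha \rho}{L^1 \cap L^\infty}\bigr),
\end{equation*}
which is the claimed estimate after taking the maximum over $|\alpha|=k$ on the left.

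\textbf{Main obstacle.} There is no real analytic difficulty here; the only subtle point is justifying the kernel identity $D^\gamma u = \nabla(\mathcal{K}*D^\gamma(H\rho))$ so that Lemma \ref{ConstLawEstimate} applies uniformly across all orders $|\gamma|\leq k$. The Sobolev hypothesis $\rho \in W^{k,1}\cap W^{k,\infty}$ and the fact that $\mathcal{K}\in W^{1,1}_{loc}$ (from the admissibility definition) make this interchange permissible; once that is in place, the rest is straightforward Hölder and Leibniz bookkeeping.
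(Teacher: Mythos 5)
Your argument is correct and matches the paper's own proof essentially step for step: Hölder with the $u$-factor in $L^{\infty}$, Lemma \ref{ConstLawEstimate} applied to $D^{\gamma}(H\rho)$ to trade derivatives of $u$ for derivatives of $\rho$, and the split into the two top-order terms ($\beta=0$, $\beta=\alpha$) versus the intermediate terms absorbed into $\|\rho\|_{W^{k-1,1}\cap W^{k-1,\infty}}^2$. The only difference is cosmetic: you isolate the commutation $D^{\gamma}u=\nabla(\mathcal{K}\ast D^{\gamma}(H\rho))$ as an explicit step, which the paper leaves implicit.
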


\begin{proof}
Note that by H\"older's inequality,
\begin{equation*}
\begin{split}
&\max_{|\alpha|=k} \sum_{|\beta|\leq k} \norm{D^{\alpha-\beta}uD^{\beta}\rho}{L^1 \cap L^{\infty}} 
\leq
\max_{|\alpha|=k} \left(\norm{D^{\alpha}u}{L^{\infty}}\norm{\rho}{L^1 \cap L^{\infty}} + \norm{u}{L^{\infty}}\norm{D^{\alpha}\rho}{L^1 \cap L^{\infty}}\right)\\
&\qquad\qquad + \max_{|\alpha|=k}\sum_{1\leq |\beta|\leq k-1} \norm{D^{\alpha-\beta}uD^{\beta}\rho}{L^1 \cap L^{\infty}}\\
& \leq 
\max_{|\alpha|=k} \left(\norm{D^{\alpha}\rho}{L^1 \cap L^{\infty}}\norm{\rho}{L^1 \cap L^{\infty}} + \norm{\rho}{L^1\cap L^{\infty}}\norm{D^{\alpha}\rho}{L^1 \cap L^{\infty}}\right)\\
&\qquad\qquad +\max_{|\alpha|=k}\sum_{1\leq |\beta|\leq k-1} \norm{D^{\alpha-\beta}u}{L^{\infty}}\norm{D^{\beta}\rho}{L^1 \cap L^{\infty}} \\
& \leq C(1+ \| \rho\|_{W^{k-1,1}\cap W^{k-1,\infty}})^2(1 + \max_{|\alpha|=k}\norm{D^{\alpha}\rho}{L^1 \cap L^{\infty}}),
\end{split}
\end{equation*}
where in both the second and third inequality, we applied Lemma \ref{ConstLawEstimate}.  This completes the proof.
\end{proof}. 

We now prove regularity of the solution. }
\begin{theorem}
\label{small data result}
Let $k$ be a non-negative integer.  Given $\nu$, $H$, $\mathcal{K}$, and $\rho_0\in W^{k,1} \cap W^{k,\infty}(\R^d)$, there exists a constant $C\geq 1$, depending only on $d$, such that whenever $T>0$ satisfies 
\begin{equation}\label{smallness2}
\nu_{min}^{-1/2}T^{\frac{1}{2}}\|H\|\|\nabla\mathcal{K}\|_{L^2}\norm{\rho_0}{L^1 \cap L^{\infty}} \leq \frac{1}{8C},
\end{equation}
then one can find a unique mild solution $\rho$ of (\ref{PDE}) in $L^{\infty}([0,T];W^{k,1} \cap W^{k,\infty}(\R^d))$.

Moreover, if $k\geq 3$, then $\rho$ is a classical solution of (\ref{PDE}) in $Lip([0,T];W^{k,1} \cap W^{k,\infty}(\R^d))$.
\end{theorem}
Note that by Remark \ref{singleT}, if $T$ satisfies (\ref{smallness2}), then it also satisfies (\ref{smallness}) for all $p\in[2,\infty]$.
\begin{proof}
The proof is very similar to that in, for example, Theorem 3.5 of \cite{Wu}, so we omit many of the details.  The idea is to apply a differential operator $D$ of order one to the mild formulation (\ref{mild solution}) to obtain an equality of the form $D\rho = G(D\rho)$, and to show that $G$ is a contraction mapping from $E$ to itself.  Here, $E$ is defined via the norm
$$\|\rho\|_{E}= \|\rho\|_{L^{\infty}([0,T]; L^1\cap L^{\infty}(\R^d))}+\|D\rho\|_{L^{\infty}([0,T]; L^1\cap L^{\infty}(\R^d))}.$$  The argument that $G$ is a contraction is very similar to that in the proof of Theorem \ref{Mild Solution Proof}.  We  then conclude from the Banach Fixed Point Theorem that $G$ has a fixed point.  By uniqueness of the solution in Theorem \ref{Mild Solution Proof}, the fixed point of $G$ must agree with the fixed point established in Theorem \ref{Mild Solution Proof}.

A similar argument can be applied for derivatives of order greater than or equal to two, yielding $$D^{\alpha} \rho \in L^{\infty}([0,T]; L^1 \cap L^{\infty}(\R^d)) \text{ for }\abs{\alpha} \leq k.$$     

\Obsolete{We proceed by induction on $k$. To establish the base case $k=0$, note that $\rho \in L^{\infty} ([0,T];L^1\cap L^{\infty} (\R^d))$ by Theorem \ref{Mild Solution Proof}.  Now assume that $k\geq 1$ and
$$
\norm{D^{\alpha}\rho}{L^{\infty}([0,T];L^1\cap L^{\infty})} <\infty \text{ for all } \alpha \text{ such that }|\alpha| \leq k-1.
$$
We want to show that 
$$
\norm{D^{\alpha}\rho}{L^{\infty}([0,T];L^1\cap L^{\infty})} <\infty \text{ for all } \alpha  \text{ such that }|\alpha| \leq k.
$$
Applying $D^{\alpha}$ for $|\alpha| = k$ to (\ref{mild solution}) gives 
\begin{align*}
&\max_{|\alpha| = k}\norm{D^{\alpha}\rho(t)}{L^1 \cap L^{\infty}} 
\leq
\norm{\rho_0}{W^{k,1} \cap W^{k,\infty}}\\
&\ \qquad+ C\nu_{min}^{-1/2}\int_0^t \abs{t-\tau}^{-\frac{1}{2}}\max_{|\alpha| = k}\sum_{|\beta|\leq k}\norm{ D^{\alpha-\beta}u D^{\beta}\rho}{L^1 \cap L^{\infty}} \, d\tau\ \leq 
\norm{\rho_0}{W^{k,1} \cap W^{k,\infty}}\\
&+ C(1+\|\rho\|_{W^{k-1,1}\cap W^{k-1,\infty}})^2\nu_{min}^{-1/2}\int_0^t \abs{t-\tau}^{-\frac{1}{2}}\max_{|\alpha| = k}(1+\norm{D^{\alpha}\rho}{L^1 \cap L^{\infty}}) \, d\tau, 
\end{align*}
where we applied Leibniz rule to get the first inequality, and we applied Lemma \ref{induction lemma} and the induction hypothesis to get the second inequality.  An application of Osgood's lemma gives $$D^{\alpha} \rho \in L^{\infty}([0,T]; L^1 \cap L^{\infty}(\R^d)) \text{ for }\abs{\alpha} \leq k.$$ } 

Note also that by Lemma \ref{ConstLawEstimate}, $$ u \in L^{\infty}([0,T]; W^{k,\infty}(\R^d)).$$ 

To show that for $k\geq 3$, $\rho$ is a classical solution to (\ref{PDE}), note that the Sobolev embedding $W^{3,p}(\R^d)\hookrightarrow C^2_B(\R^d)$ for $p\in (d,\infty)$ and Lemma \ref{ConstLawEstimate} imply that both $u$ and $\rho$ belong to $L^{\infty}([0,T]; C^2_B(\R^d))$.  Taking the time derivative of (\ref{mild solution}) and using Leibniz rule gives
\begin{equation*}
\begin{split}
&\partial_t \rho(x,t) = \partial_t (e^{\nu\Delta t}\rho_0)(x) - \partial_t \int_0^t \nabla e^{\nu\Delta(t-\tau)}(u\rho)(\tau)\, d\tau \\
&\qquad = \nu\Delta(e^{\nu\Delta t}\rho_0)(x) - \nabla e^{\nu\Delta 0}(u\rho)(x,t)  - \int_0^t \partial_t [\nabla e^{\nu\Delta (t-\tau)}(u\rho)(\tau)] \, d\tau\\
&\qquad = \nu\Delta(e^{\nu\Delta t}\rho_0)(x) - \nabla\cdot (u\rho)(x,t) - \int_0^t \nu\Delta\nabla e^{\nu\Delta(t-\tau)}(u\rho)(\tau) \, d\tau\\
&\qquad =  - \nabla\cdot (u\rho)(x,t) + \nu\Delta\left(e^{\nu\Delta t}\rho_0 
 - \int_0^t \nabla e^{\nu\Delta(t-\tau)}(u\rho)(\tau) \, d\tau \right)\\
 &\qquad = - \nabla\cdot (u\rho)(x,t) + \nu\Delta\rho(x,t),
\end{split}
\end{equation*}
where we used that $\rho$ satisfies (\ref{mild solution}) to obtain the last equality.  Using the regularity of $u$ and $\rho$, we conclude that $\partial_t\rho$ exists and is bounded.  Thus, $\rho \in Lip([0,T]; C^2_B(\R^d))$ and satisfies (\ref{PDE}). 
\end{proof}

\section*{Acknowledgements}
\noindent The authors thank Patrick De Leenheer for useful discussions.  EC gratefully acknowledges support by the Simons Foundation through Grant No. 429578.   


\end{document}